\newtheorem{theorem}{Theorem}[section]
\newtheorem{lemma}[theorem]{Lemma}
\newtheorem{proposition}[theorem]{Proposition}
\theoremstyle{definition}
\newtheorem{definition}[theorem]{Definition}
\newtheorem{example}[theorem]{Example}
\newtheorem*{question}{Question}
\newtheorem{remark}{Remark}
\numberwithin{equation}{section}
\newcommand{\im}{\mathrm{Im}}
\def\gcd{\mathrm{gcd}}
\newcommand{\ab}{\textrm{ab}}
\newcommand{\Rinf}{R_{\infty}}
\newcommand{\Rcinf}{R^{\chi}_{\infty}}
\begin{document}

\title{Twisted conjugacy and commensurability invariance}
\author{Parameswaran Sankaran}
\address{Chennai Mathematical Institute SIPCOT IT Park, Siruseri, Kelambakkam, 603103, India}
\email{sankaran@cmi.ac.in}
\author{Peter Wong}
\address{Department of Mathematics, Bates College, Lewiston,
ME 04240, U.S.A.}
\email{pwong@bates.edu}
\thanks{This work was initiated during the second author's visit to the Institute of Mathematical Sciences - Chennai, India, July 31 - August 17, 2016 and carried out in subsequent vists during August 23 - September 1, 2018 (at IMSc) and December 14 - 20, 2019 (at  Chennai Mathematical Institute). The second author would like to thank the IMSc  and the CMI for their hospitality and support throughout his visits.}
\begin{abstract}
A group $G$ is said to have property $R_{\infty}$ if for every automorphism $\varphi \in {\rm Aut}(G)$, the cardinality of the set of $\varphi$-twisted conjugacy classes is infinite. Many classes of groups are known to have such property. However, very few examples are known for which $R_{\infty}$ is {\it geometric}, i.e., if $G$ has property $R_{\infty}$ then any group quasi-isometric to $G$ also has property $R_{\infty}$. In this paper, we give examples of groups and conditions under which $R_{\infty}$ is preserved under commensurability. The main tool is to employ the Bieri-Neumann-Strebel invariant.
\end{abstract}
\date{\today}
\keywords{$Sigma$ invariants, commensurability, property $R_{\infty}$}
\subjclass[2010]{Primary: 20F65; Secondary: 20E45}
\maketitle

\newcommand{\af}{\alpha}
\newcommand{\et}{\eta}
\newcommand{\ga}{\gamma}
\newcommand{\ta}{\tau}
\newcommand{\ph}{\varphi}
\newcommand{\bt}{\beta}
\newcommand{\lb}{\lambda}
\newcommand{\wh}{\widehat}
\newcommand{\sg}{\sigma}
\newcommand{\om}{\omega}
\newcommand{\cH}{\mathcal H}
\newcommand{\cF}{\mathcal F}
\newcommand{\N}{\mathcal N}
\newcommand{\R}{\mathcal R}
\newcommand{\Ga}{\Gamma}
\newcommand{\cc}{\mathcal C}
\newcommand{\bea} {\begin{eqnarray*}}
\newcommand{\beq} {\begin{equation}}
\newcommand{\bey} {\begin{eqnarray}}
\newcommand{\eea} {\end{eqnarray*}}
\newcommand{\eeq} {\end{equation}}
\newcommand{\eey} {\end{eqnarray}}
\newcommand{\ovl}{\overline}
\newcommand{\vv}{\vspace{4mm}}
\newcommand{\lra}{\longrightarrow}
\newcommand{\SO}{\mathcal S}

\bibliographystyle{amsplain}

\section{Introduction}

Given a group endomorphism $\varphi:\pi \to \pi$, consider the (left) action of $\pi$ on $\pi$ via $\sigma \cdot \alpha \mapsto \sigma \alpha \varphi(\sigma)^{-1}$. The set of orbits of this action, denoted by $\mathcal R(\varphi)$, is the set of $\varphi$-twisted conjugacy classes or the set of {\it Reidemeister classes}. The cardinality of $\mathcal R(\varphi)$ is called the {\it Reidemeister number} $R(\varphi)$ of $\varphi$. The study of Reidemeister classes arises naturally in the classical Nielsen-Reidemeister fixed point theory (see e.g. \cite{J}). More precisely, for any selfmap $f:M\to M$ of a compact connected manifold $M$ with $\dim M\ge 3$, the minimum number of fixed points among all maps homotopic to $f$ is equal to the Nielsen number $N(f)$ which is bounded above by the Reidemeister number $R(f)=R(\varphi)$ where $\varphi$ is the induced homomorphism by $f$ on $\pi_1(M)$. While $N(f)$ is an important homotopy invariant, its computation is notoriously difficult. When $M$ is a Jiang-type space, then either $N(f)=0$ or $N(f)=R(f)$. While $N(f)$ is always finite, $R(f)$ need not be. Thus, when $R(f)=\infty$ we have $N(f)=0$ which implies that $f$ is deformable to be fixed point free. As a consequence of the $R_{\infty}$ property, it is shown in \cite{GW2} that for any $n\ge 5$, there exists a compact $n$-dimensional nilmanifold on which {\it every} self homeomorphism is isotopic to a fixed point free homeomorphism.

In \cite{LL}, it is shown that if $\varphi$ is an automorphism of a finitely generated non-elementary word hyperbolic group then $R(\varphi)=\infty$.
Since then many classes of groups have been shown to possess property $R_{\infty}$. However, most of the methods employed in these works have been ad hoc and specific to the classes of groups in question. On the other hand, $\Sigma$-theory, i.e., the Bieri-Neumann-Strebel invariant \cite{BNS}, has been used in \cite{GK} to prove property $R_{\infty}$ under certain conditions on $\Sigma^1$. Subsequent work in \cite{GS, GSS, KW2, SgW} further explore the use of $\Sigma$-theory in connection with property $R_{\infty}$. From the point of view of geometric group theory, it is natural to ask whether property $R_{\infty}$ is geometric, i.e., invariant up to quasi-isometry. In general, $R_{\infty}$ is not even invariant under commensurability and hence not invariant under quasi-isometry. The simplest example is that of $\mathbb Z$ as an index $2$ subgroup of the infinite dihedral group $D_{\infty}$ (see e.g. \cite{GW2}) where the former does not have $R_{\infty}$ while the latter does.

Since being non-elementary and word hyperbolic is geometric, the work of \cite{LL} implies that $R_{\infty}$ is invariant under quasi-isometry for the family of finitely generated non-elementary word hyperbolic groups (see also \cite{fel} in which a sketch of proof was given for non-elementary relative hyperbolic groups). Another family is that of the amenable or solvable Baumslag-Solitor groups $BS(1,n)$ for $n>1$. These groups have been completely classified in \cite{FM} up to quasi-isometry. For higher $BS(m,n)$ where $m\ge 2$ and $n>m$, it turns out that they are all quasi-isometric to each other as shown in \cite{Wh}. These Baumslag-Solitor groups (the fundamental group of the torus, $BS(1,1)$, is excluded here) have been shown in \cite{FeG} to have property $R_{\infty}$. More generally, the family of generalized Baumslag-Solitor (GBS) groups \cite{L} and any groups quasi-isometric to them also have property $R_{\infty}$ \cite{TW2}. Moreover, $R_{\infty}$ is also invariant under quasi-isometry for a certain solvable generalization of the $BS(1,n)$ \cite{TW1}. 

As another class of examples, let $\Lambda$ be an irreducible lattice in a connected semisimplie non-compact real Lie group $G$ with finite centre.  It is known that any finitely generated group $\Gamma$ quasi-isometric to $\Lambda$ has the $R_\infty$-property \cite{ms}.

Despite the success in \cite{LL,TW1,TW2,ms}, there have been no new examples of groups for which property $R_{\infty}$ is geometric. One difficulty is the determination of the group of quasi-isometries in general. As a first step, we ask

\begin{question}
For what class of groups is $R_{\infty}$ a commensurability property? Equivalently, if $G$ has property $R_{\infty}$ and $\Gamma$ is commensurable to $G$, (i.e., there exist subgroups $H < G, \bar H < \Gamma$ so that $H\cong \bar H, [G:H]<\infty, [\Gamma:\bar H]<\infty$) when does $\Gamma$ also have property $R_{\infty}$?
\end{question}

One of the main results of this paper is the following:   For the definition of the class of groups $\SO$ as well as the 
proof of the theorem, 
see \S 3.
\begin{theorem}\label{comm1}
Let $G$ be a finitely generated group. Suppose every finite index subgroup $H$ has the property that $b_1(H)=b_1(G)$. If $G\in \SO$ then every group $\hat G$ commensurable to $G$ also has property $R_{\infty}$.
\end{theorem}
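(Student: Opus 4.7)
The plan is to prove the theorem in three steps: (i) replace $\hat G$ by a characteristic finite-index subgroup $N$ which is isomorphic to a finite-index subgroup $N'\le G$; (ii) transfer property $R_\infty$ from $G$ to $N'$ by showing that $\SO$-membership descends to finite-index subgroups under the Betti number hypothesis; and (iii) lift property $R_\infty$ back up from $N$ to $\hat G$ via the standard behavior of Reidemeister numbers in a short exact sequence.

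For (i), let $H\le G$ and $\bar H\le \hat G$ be finite-index subgroups with $H\cong \bar H$ witnessing commensurability. Since $\hat G$ is finitely generated (being commensurable to the finitely generated group $G$), it has only finitely many subgroups of each finite index, so the characteristic core
\[
N:=\bigcap_{\alpha\in \mathrm{Aut}(\hat G)}\alpha(\bar H)
\]
is a finite intersection and hence a characteristic finite-index subgroup of $\hat G$ contained in $\bar H$. Let $N'\le H\le G$ be the image of $N$ under the isomorphism $\bar H\cong H$; then $N'$ has finite index in $G$.

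For (ii), note that by hypothesis $b_1(N')=b_1(G)$; combined with the standard injectivity of restriction $H^1(G;\mathbb{R})\hookrightarrow H^1(N';\mathbb{R})$, this forces restriction to be an isomorphism, identifying the character spheres $S(G)\cong S(N')$. The naturality of the BNS invariant under finite-index inclusion then gives $\Sigma^1(N')=\Sigma^1(G)$ under this identification. Since membership in $\SO$ is, as the overall approach of the paper suggests, a condition phrased in terms of $\Sigma^1$ together with a compatible action of $\mathrm{Aut}$, we deduce $N'\in\SO$, so $N'$---and hence the isomorphic group $N$---has property $R_\infty$.

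For (iii), let $\varphi\in \mathrm{Aut}(\hat G)$. Since $N$ is characteristic in $\hat G$, $\psi:=\varphi|_N$ is an automorphism of $N$, and by (ii), $R(\psi)=\infty$. The inclusion $N\hookrightarrow \hat G$ induces a map $\mathcal R(\psi)\to \mathcal R(\varphi)$ whose fibres have cardinality at most $[\hat G:N]<\infty$, so $R(\varphi)\ge R(\psi)/[\hat G:N]=\infty$, and $\hat G$ has property $R_\infty$. The main obstacle I foresee is step (ii): one must verify, using the precise definition of $\SO$ given in \S 3, that the $\SO$-property is inherited by finite-index subgroups with the same first Betti number. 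The Betti number hypothesis is exactly what is needed to identify the character spheres of $G$ and $N'$ so that $\Sigma^1(G)$ and $\Sigma^1(N')$ coincide; any additional conditions packaged into the definition of $\SO$ (for example, symmetry or non-symmetry properties of $\Sigma^1$, or an $\mathrm{Aut}$-equivariance statement) must also transfer under this identification, and that is where the technical heart of the argument will lie.
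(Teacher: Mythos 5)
Your proposal is correct and follows essentially the same route as the paper: pass to a characteristic finite-index subgroup of $\hat G$ isomorphic to a finite-index subgroup of $G$, show that subgroup lies in $\SO$ (hence has property $R^{\chi}_{\infty}$ and so $R_{\infty}$) because the Betti-number hypothesis forces the character sphere and $\Sigma^1$ to agree with those of $G$, and lift $R_{\infty}$ back to $\hat G$ by the characteristic finite-index lemma (Lemma \ref{Rinfty_for_finite}), which you reprove rather than cite. The one step you leave schematic --- that $\SO$-membership transfers to finite-index subgroups with the same $b_1$ --- is precisely Lemma \ref{betti} of the paper (applied through the normal core, since Proposition \ref{sigma-finite} requires normality), and the mechanism you describe for it is the correct one.
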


The objective of this paper is to begin a systematic approach to studying $\Rinf$. We give conditions under which, when employing $\Sigma$ - theory, property $R_{\infty}$ is invariant under commensurability. In doing so, we introduce a stronger notion of $\Rinf$, namely $\Rcinf$ in section 2. When the complement $(\Sigma^1)^c$ is a finite spherical polytope lying inside an open hemisphere, we can find a point $[\chi]\in S(G)$ that is fixed by all automorphisms of $G$. If $[\chi]$ is {\it rigid} then $G$ has property $\Rcinf$ (Theorem \ref{SO-R}) and hence $\Rinf$. 

In section 4,  we investigate situations when the $\Sigma$ - invariants of $G$ are preserved under automorphisms of a finite index subgroup $H$. In section 5, we construct new families of groups that are direct products and free products with property $\Rinf$.

\section{Background on BNS invariants and $R_{\infty}$}

\subsection{Sigma invariants}

Let $G$ be a finitely generated group. The set ${\rm Hom}(G,\mathbb R)$ of homomorphisms from $G$ to the additive group $\mathbb R$ is a real vector space with dimension equal to $m$, the ${\mathbb Z}$-rank of the abelianization $G^{\rm ab}$ of $G$.  Denote by $\partial_{\infty}{\rm Hom}(G,\mathbb R)$ the boundary at infinity of $\mathbb R^m$ (ie.\ the set of geodesic rays in $\mathbb R^m$ initiating from the origin).  This is homeomorphic to the {\it character sphere of $G$} defined as the set of equivalence classes $S(G) := \{ [\chi] | \chi \in {\rm Hom}(G,\mathbb{R}) - \{ 0 \} \}$ where $\chi_1 \sim \chi_2$ if and only if $\chi_1 = r\chi_2$ for some $r > 0$.  Let $\Gamma$ denote the Cayley graph of $G$ with respect to a fixed generating set $S$.  Given $[\chi] \in S(G)$, define $\Gamma_\chi$ to be the subgraph of $\Gamma$ generated by the vertices $\{ g \in G \vert \chi(g) \geq 0 \}$.  We say $[\chi] \in \Sigma^1(G)$ if $\Gamma_{\chi}$ is path connected. 
For $n>1$, there are higher order $\Sigma$ - invariants $\Sigma^n$ introduced in \cite{BR}. 

The following are some well-known and useful facts (see e.g. \cite{Strebel}).   The notation $\Sigma^1(G)^c$ represents the complement of $\Sigma^1(G)$ in $S(G)$.

\begin{proposition}\label{Factor}
Suppose $\phi:G \to H$ is an epimorphism, and $\chi \in {\rm Hom}(H,\mathbb{R})$.  If $[\chi \circ \phi] \in \Sigma^1(G)$, then $[\chi] \in \Sigma^1(H)$. 
\end{proposition}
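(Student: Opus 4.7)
The strategy is to exploit the functoriality of Cayley graphs: the epimorphism $\phi: G \twoheadrightarrow H$ should induce a graph map that carries the $(\chi\circ\phi)$-positive subgraph of $G$ onto the $\chi$-positive subgraph of $H$, so connectivity upstairs will force connectivity downstairs.

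Concretely, I would first fix any finite generating set $S$ of $G$. Since $\phi$ is surjective, $\phi(S)$ is a finite generating set of $H$. Using the standard fact that $\Sigma^1(H)$ is independent of the finite generating set chosen, I compute it via the Cayley graph $\Ga_H$ of $H$ with respect to $\phi(S)$. The homomorphism $\phi$ then extends to a cellular map $\tilde\phi:\Ga_G\to\Ga_H$ sending each vertex $g$ to $\phi(g)$ and each edge $(g,gs)$ to $(\phi(g),\phi(g)\phi(s))$, which is either an edge of $\Ga_H$ or degenerates to the single vertex $\phi(g)$ when $s\in\ker\phi$.

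Next, I would verify that $\tilde\phi$ restricts to a map $(\Ga_G)_{\chi\circ\phi}\to(\Ga_H)_\chi$: any vertex $g$ in the domain satisfies $(\chi\circ\phi)(g)\ge 0$, hence $\phi(g)$ satisfies $\chi(\phi(g))\ge 0$. This restriction is moreover surjective on vertices, since every $h\in(\Ga_H)_\chi$ admits a lift $g\in\phi^{-1}(h)$ with $(\chi\circ\phi)(g)=\chi(h)\ge 0$.

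Finally, to establish path-connectivity of $(\Ga_H)_\chi$ it suffices, given $h_1,h_2\in(\Ga_H)_\chi$, to choose lifts $g_1,g_2\in(\Ga_G)_{\chi\circ\phi}$, invoke the hypothesis $[\chi\circ\phi]\in\Sigma^1(G)$ to connect them by an edge-path in $(\Ga_G)_{\chi\circ\phi}$, and push this path down via $\tilde\phi$ to obtain a path in $(\Ga_H)_\chi$ joining $h_1$ and $h_2$. I do not foresee a genuine obstacle: the argument is essentially categorical, and the only mild subtlety is that edges labeled by elements of $\ker\phi$ collapse to constant segments, which is harmless for connectivity.
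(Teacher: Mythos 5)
Your proof is correct and is the standard argument for this fact: push the generating set forward, use independence of $\Sigma^1$ from the choice of finite generating set, and project paths in the positive subgraph of the Cayley graph of $G$ down to the positive subgraph of the Cayley graph of $H$. The paper itself gives no proof of this proposition --- it records it as a well-known fact with a reference to Strebel's notes --- and your argument is essentially the one found there, so there is nothing to correct.
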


\begin{proposition}\label{Sigma-Product}
For finitely generated groups $G$ and $H$, $\Sigma^1(G \times H)^c = (\Sigma^1(G)^c \circledast \emptyset) \cup (\emptyset \circledast \Sigma^1(H)^c)$ where $\circledast$ denotes the spherical join on the character sphere $S(G\times H)$.
\end{proposition}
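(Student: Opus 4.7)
The plan rests on the canonical decomposition $\myhom(G\times H,\mathbb R) \cong \myhom(G,\mathbb R) \oplus \myhom(H,\mathbb R)$ induced by the two projections, under which every character on $G\times H$ is uniquely $\chi = \chi_G \oplus \chi_H$. The right-hand side of the claimed identity unpacks to: the set of classes $[\chi]$ for which exactly one of $\chi_G,\chi_H$ is zero, the other lying in the corresponding $\Sigma^1$-complement. I therefore need both that every such class lies in $\Sigma^1(G\times H)^c$, and conversely that a class with both components nonzero --- or with one component zero and the other in $\Sigma^1$ of its factor --- lies in $\Sigma^1(G\times H)$.

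For the inclusion $\supseteq$, I would invoke the contrapositive of Proposition \ref{Factor} applied to the projections $\pi_G$ and $\pi_H$: if $[\chi_G]\in \Sigma^1(G)^c$ then $[\chi_G\circ \pi_G] = [\chi_G\oplus 0] \in \Sigma^1(G\times H)^c$, and symmetrically for $H$. This handles both summands in the spherical join.

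For the reverse inclusion I would fix the generating set $S = S_G \sqcup S_H$ (closed under inverses) for $G\times H$. The easy sub-case is when, say, $\chi_H = 0$ and $[\chi_G]\in \Sigma^1(G)$: the subgraph $\Gamma_\chi$ is the set of $(g,h)$ with $\chi_G(g)\geq 0$, and every vertex connects to $(g,1)$ through $S_H$-edges (which preserve $\chi$) and then to $(1,1)$ via the lift of a path in $\Gamma_{\chi_G}$. The non-trivial sub-case, and the main obstacle, is when both $\chi_G$ and $\chi_H$ are nonzero; then one must construct, from an arbitrary $(g,h) \in \Gamma_\chi$, an explicit path in $\Gamma_\chi$ to $(1,1)$.

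To handle this main case I would pick $u\in S_G$ with $a:=\chi_G(u)>0$ and $v\in S_H$ with $b:=\chi_H(v)>0$ (these exist because $S$ contains inverses and $\chi_G,\chi_H$ are nonzero) and zig-zag between the two factors. The key observation is that single-coordinate steps change $\chi$ only in that coordinate, so repeated $u$- or $v$-moves climb $\chi$ monotonically, and at sufficient ``altitude'' in one factor one can traverse any prescribed Cayley path in the other without leaving $\Gamma_\chi$. Concretely: climb from $(g,h)$ to $(gu^{N_1}, hv^{M_1})$, descend the $G$-coordinate to $(1, hv^{M_1})$ through a fixed path in $\mathrm{Cay}(G)$ (choosing $M_1$ so the $hv^{M_1}$-altitude absorbs the $\chi_G$-oscillation), climb $G$ back up to $(u^{N_2}, hv^{M_1})$, descend the $H$-coordinate to $(u^{N_2}, 1)$ analogously, and finally close along the monotone path $(u^{N_2},1), (u^{N_2-1},1),\dots,(1,1)$ whose $\chi$-values $N_2 a, (N_2-1)a,\dots,0$ are manifestly non-negative. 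The bookkeeping of altitudes is routine; the genuine content is the existence and use of the distinguished monotone elements $u,v$ that enable the final descent.
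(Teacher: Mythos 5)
Your argument is correct. Note, however, that the paper does not prove this proposition at all: it is listed among ``well-known and useful facts'' with a pointer to Strebel's notes, so there is no in-paper proof to match against. What you have written is essentially the standard from-the-definitions proof that the citation stands in for. The two halves are sound: the inclusion $\supseteq$ is exactly the contrapositive of Proposition \ref{Factor} applied to the two projections, and the inclusion $\subseteq$ is handled by your two sub-cases. In the main sub-case (both components nonzero) the zig-zag is valid --- the constants can be chosen in the order (path in $\mathrm{Cay}(G)$ from $gu^{N_1}$ to $1$) $\rightarrow M_1 \rightarrow$ (path in $\mathrm{Cay}(H)$ from $hv^{M_1}$ to $1$) $\rightarrow N_2$, with no circularity, and the final monotone descent along powers of $u$ closes the path at $(1,1)$. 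The only point worth saying aloud is that you compute $\Sigma^1(G\times H)$ with respect to the product generating set $S_G\sqcup S_H$, which is legitimate because $\Sigma^1$ is independent of the choice of finite generating set; with that remark in place the proof is complete and self-contained, which is more than the paper offers for this statement.
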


Consider a group extension given by the following short exact sequence
$$
1\to H\to G\to  K \to 1
$$
where $H$ and $G$ are finitely generated and $K$ is finite. Since $K$ is finite, the restriction homomorphism $\textrm{Hom}(G,\mathbb R)\to \textrm{Hom}(H,\mathbb R)$ is a monomorphism so that $S(G)$ can be regarded as a subsphere of $S(H)$. The following expression relates the $\Sigma$ - invariants of $G$ with those of $H$ (see \cite[Cor. 3.2]{KW2} or \cite[Thm. 9.3]{mmv}).

\begin{proposition}\label{sigma-finite}
For $n\ge 1$,
$$
\Sigma^n(G)=\Sigma^n(H)\cap \partial_{\infty}{\rm Fix} (\hat \nu)
$$
where $\nu: K\to G$ is any left transversal such that $\nu(1_K)=1_G$, and ${\rm Fix} (\hat \nu)=\{\phi \in {\rm Hom}(H,\mathbb R)\mid \phi(\nu(q)^{-1}h\nu(q))=\phi(h) \text{~for all $h\in H, q\in K$}\}$ is a subspace of ${\rm Hom}(H,\mathbb R)$ .
\end{proposition}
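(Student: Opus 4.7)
The plan is to split the proof into two parts. First, I would verify that under the restriction embedding $S(G) \hookrightarrow S(H)$ indicated in the excerpt, the subsphere $S(G)$ is exactly $\partial_{\infty}\mathrm{Fix}(\hat\nu)$. Second, I would establish $\Sigma^n(G) = \Sigma^n(H) \cap S(G)$ by leveraging the finite index $[G:H] = |K| < \infty$.

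For the identification of $S(G)$, one direction is immediate: if $\chi \in \mathrm{Hom}(G,\mathbb R)$, then for every $h \in H$ and $q \in K$,
\[
\chi(\nu(q)^{-1} h\, \nu(q)) = -\chi(\nu(q)) + \chi(h) + \chi(\nu(q)) = \chi(h),
\]
so $\chi|_H \in \mathrm{Fix}(\hat\nu)$. For the converse, I would apply the five-term exact sequence of $1 \to H \to G \to K \to 1$,
\[
H_1(K,\mathbb Z) \to (H^{\ab})_K \to G^{\ab} \to K^{\ab} \to 0,
\]
and observe that tensoring with $\mathbb R$ annihilates the outer two terms since $K$ is finite, yielding $G^{\ab}\otimes \mathbb R \cong (H^{\ab} \otimes \mathbb R)_K$. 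Dualizing gives $\mathrm{Hom}(G,\mathbb R) \cong \mathrm{Hom}(H,\mathbb R)^K$, and the right-hand side coincides with $\mathrm{Fix}(\hat\nu)$ by inspection of the $K$-action implemented via the transversal $\nu$.

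For the $\Sigma^n$ equality, I would handle $n=1$ first by choosing a finite generating set for $G$ that contains both a generating set for $H$ and the transversal elements $\nu(q)$. The Cayley graph $\Gamma_H$ sits inside $\Gamma_G$, and each vertex $g = h\nu(q) \in G$ is joined to $h \in H$ by an edge whose endpoints differ in $\chi$-value by the bounded quantity $\chi(\nu(q))$. It follows that $\Gamma_{G,\chi}$ is path-connected if and only if $\Gamma_{H,\chi}$ is, since any path in one can be perturbed by uniformly bounded jumps into the other. For $n \geq 2$ I would invoke the Bieri-Renz characterization of $\Sigma^n$ in terms of the submonoid $G_\chi := \chi^{-1}([0,\infty))$ having type $FP_n$ (as trivial $\mathbb Z G_\chi$-module), and transfer this finiteness across the finite-index inclusion $H_\chi \hookrightarrow G_\chi$ via a Shapiro-lemma-style induction/restriction argument. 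The main obstacle is precisely this higher-$n$ transfer: unlike for groups, monoid rings do not support a clean functorial induction/restriction formalism, so the $FP_n$ property of the trivial module over $\mathbb{Z}H_\chi$ and over $\mathbb{Z}G_\chi$ must be compared by hand. This step is carried out explicitly in \cite[Thm.~9.3]{mmv} using horospherical techniques (with \cite[Cor.~3.2]{KW2} handling the $n=1$ case), and I would ultimately defer the delicate bookkeeping to those references rather than reproduce it.
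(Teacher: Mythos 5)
The paper offers no proof of this proposition at all---it is quoted as known, with the references \cite[Cor.~3.2]{KW2} and \cite[Thm.~9.3]{mmv}---and your proposal, after correctly identifying $\partial_{\infty}\mathrm{Fix}(\hat\nu)$ with the image of $S(G)$ under the restriction embedding into $S(H)$, ultimately defers the substantive finite-index invariance of $\Sigma^n$ to those same references, so the two treatments coincide in substance. One minor slip: the leftmost displayed term of Stallings' five-term exact sequence should be $H_2(K,\mathbb Z)$ rather than $H_1(K,\mathbb Z)$, though this is harmless here since both groups are finite and die upon tensoring with $\mathbb R$.
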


\subsection{Property $R^{\chi}_{\infty}$}

Recall from \cite{GK}, the role that $\Sigma$-theory plays is that the $\Sigma$-invariant can be used to obtain a rational point on the character sphere that is fixed by all automorphisms. It fact, the underlying principle is the existence of a character $\chi:G\to \mathbb R$ such that $\chi\circ \varphi=\chi$ for ALL $\varphi\in {\rm Aut}(G)$. In this case, the image ${\rm Im}(\chi)$ is a finitely generated abelian subgroup of $\mathbb R$ and is isomorphic to $\mathbb Z^r$ for some positive intger $r$. The equality $\chi\circ \varphi=\chi$ implies that $\varphi$ induces the identity on ${\rm Im}(\chi)$ which implies that $R(\varphi)=\infty$ since ${\rm Ker}(\chi)$ is characteristic.

\begin{definition}\label{rigid}  (Cf. \cite[Definition 1.6, \S6.E]{GSS}.)
Let $G$ be a finitely generated group and let $\chi: G\to \mathbb R$ be a non-trivial character. The character $\chi$ is said to be {\it rigid} if for any $r\in \mathbb R$, $r\cdot {\rm Im}(\chi)={\rm Im}(\chi)$ implies $r=\pm 1$. We say the character class $[\chi]$ is {\it rigid}
if for any $s>0$, the character $s\cdot \chi$ is rigid. 
\end{definition}

Note that a character $\chi$ is rigid if and only if its class $[\chi]$ is. 
Thus, if for all $\varphi \in {\rm Aut}(G)$, $[\chi \circ \varphi]=\varphi^*([\chi])=[\chi]$ and $[\chi]$ is rigid then $\chi \circ \varphi=\chi$ for all $\varphi\in {\rm Aut}(G)$. Evidently, if $[\chi]$ is rational (i.e., $\chi(G)\cong \mathbb Z$) then $[\chi]$ is rigid.  

Recall from \cite[\S6E]{GSS} that a character  $\chi$ as well as the class $[\chi]$ are called {\it transcendental} if 
$\im(\chi)\subset \mathbb R$ has the property that if $a, b\in {\rm Im}\chi$ are non-zero, then $a/b$ is either rational or transcendental.  It follows that if $[\chi]$ is transcendental then it is also rigid.  It is easily seen that if $\chi:G\to \mathbb R$ has image $\mathbb Z+2^{1/3} \mathbb Z$, then $[\chi]$ is rigid;
evidently it is not transcendental.

Suppose that $\chi:G\to \mathbb R$ is a transcendental character and $\chi':G\to \mathbb R$ is a character such 
that $Im(\chi')\subset Im(\chi)$.  Then $\chi'$ is also transcendental.  This is in general not true of 
rigid characters.  For example if $G=\mathbb Z^3, $ and $\chi,\chi' :G\to \mathbb R$ are defined as 
$\chi(a,b,c)=a+b\sqrt 2+c\pi, \chi'(a,b,c)=a+b\sqrt 2$, then $Im (\chi')\subset Im(\chi)$ but $\chi'$ 
is not rigid, even though $\chi$ is.

\begin{remark} Suppose a finitely generated group $G$ has a character sphere $S(G)$ of dimension $n=\dim S(G)$. Then for any automorphism $\varphi \in {\rm Aut}(G)$, the induced homeomorphism $\varphi^*: S(G)\to S(G)$ has topological degree $\pm 1$. The Lefschetz number $L(\varphi^*)=1+(-1)^n\cdot \deg \varphi^*$. Thus, if $n$ is even and $\deg \varphi^*=1$ then the Lefschetz Fixed Point Theorem asserts that $\varphi^*([\chi])=[\chi]$ for some $[\chi]$. However, there is no guarantee that $[\chi]$ is rigid. Similarly, if $\Sigma^1(G)^c$ is topologically a disk, then the Brouwer Fixed Point Theorem asserts every $\varphi^*$ has a fixed point but again such a fixed point need not be rigid. In fact, there exists a group $G$ \cite{GSS} where $S(G)$ has  a point $[\chi]$ that is fixed by $\varphi^*$ for all $\varphi\in {\rm Aut}(G)$ but $[\chi]$ is {\it not} rigid.
\end{remark}

The existence of such a globally {\it fixed} character {\it that is witnessed by $\Sigma$-theory} leads us to the following stronger notion of property $R_{\infty}$. 

\begin{definition}\label{witness}
A group $G$, not necessarily finitely generated, is said to have property $R^{\chi}_{\infty}$  if there exists a non-trivial character $\chi: G \to \mathbb R$ such that $\chi \circ \varphi=\chi$ for all $\varphi\in {\rm Aut}(G)$. Note that if $G$ has property $R^{\chi}_{\infty}$, it necessarily must have property $R_{\infty}$.
\end{definition}

\begin{example}\label{witness-ex}
Take $G=F_r\times BS(1,2)\times BS(1,2)$ where $F_r$ is the free group of rank $r\ge 2$. It is easy to see that the complement $[\Sigma^1(G)]^c=\mathbb S^{r-1}\cup \{+\infty\} \cup \{ +\infty'\}$ is an infinite set where $+\infty$ and $+\infty'$ denote the north poles of the two distinct copies of $S(BS(1,2))$ and $\mathbb S^{r-1}$ is a n
$(r-1)-$dimensional sphere disjoint from $+\infty$ and $+\infty'$. It follows that either each of the points $+\infty$ and $+\infty'$ is fixed, in which case, one of these endpoints yields a character that is fixed by all automorphisms, or $[\bar{ \chi}]$, which corresponds to the point on the arc obtained from taking the {\it average} of the characters $\chi, \chi \circ \varphi$ associated to those two points, is fixed by $\varphi^*$ for all $\varphi \in {\rm Aut}(G)$. Here $\varphi^*$ is the homeomorphism of $S(G)$ induced by $\varphi$. Since the points $+\infty$ and $+\infty'$ are rational, it follows that $[\bar \chi]$ is also rational and hence rigid. Again, we conclude that $\bar \chi$ is fixed by all automorphisms. Hence, $G$ has property $R^{\chi}_{\infty}$.
\end{example}

On the contrary, there are examples of groups for which the property $R_\infty$ does not imply the property 
$R_\infty ^\chi$.

\begin{example}\label{non-witness-ex}
By analyzing the automorphisms of the fundamental group of the Klein Bottle $K$ as in \cite[Lemma 2.1, Theorem 2.2]{GW2}, it is straightforward to see that there is no $[\chi]\in S(\pi_1(K))=\Sigma^1(\pi_1(K))\cong \mathbb S^0$ that is fixed by all automorphisms. Thus, $\pi_1(K)$ has property $R_{\infty}$ but not $R^{\chi}_{\infty}$.
\end{example}

\section{Conditions on the first Betti number}\label{Betti_1}

Consider a group extension
\begin{equation}\label{f-ext}
1\to H\to G\to K\to 1
\end{equation}
where $H$ and $G$ are finitely generated and $K$ is finite. Let $\nu :K\to G$ be a left transversal with 
$\nu(1_K)=1_G$.

The following simple relation between property $R_{\infty}$ for $G$ and that for $H$ is straightforward (see e.g. \cite{GW1}).

\begin{lemma}\label{Rinfty_for_finite}
Given the extension \eqref{f-ext},
if $H$ is characteristic and has property $R_{\infty}$ then $G$ has property $R_{\infty}$.
\end{lemma}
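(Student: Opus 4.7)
The plan is to exploit the inclusion-induced map on Reidemeister sets and a natural $K$-action whose orbits are the fibers.

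First, since $H$ is characteristic, any $\varphi \in \mathrm{Aut}(G)$ restricts to an automorphism $\varphi|_H \in \mathrm{Aut}(H)$. By hypothesis, $\mathcal{R}(\varphi|_H)$ is infinite, and our task is to show $\mathcal{R}(\varphi)$ is infinite as well. The natural tool is the map
\[
i_*: \mathcal{R}(\varphi|_H) \longrightarrow \mathcal{R}(\varphi), \qquad [h]_{\varphi|_H} \longmapsto [h]_\varphi,
\]
induced by the inclusion $H \hookrightarrow G$. This is well defined since $\varphi|_H$-twisted conjugate elements of $H$ are $\varphi$-twisted conjugate in $G$.

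Next, I would introduce the following $G$-action on $\mathcal{R}(\varphi|_H)$:
\[
g \cdot [h]_{\varphi|_H} := [\,g h \varphi(g)^{-1}\,]_{\varphi|_H}.
\]
One checks this is well defined using that $H$ is normal and $\varphi(H)=H$, so $ghv(g)^{-1} \in H$ for any $g\in G$, $h\in H$. When $g \in H$, the action reproduces the definition of $\varphi|_H$-twisted conjugacy, so this action factors through $K = G/H$. Crucially, two classes $[h_1]_{\varphi|_H}$ and $[h_2]_{\varphi|_H}$ satisfy $i_*([h_1]) = i_*([h_2])$ if and only if there exists $g \in G$ with $h_2 = g h_1 \varphi(g)^{-1}$, which is precisely the condition that they lie in the same $K$-orbit. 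Hence the fibers of $i_*$ are exactly the $K$-orbits.

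The conclusion is then immediate: since $K$ is finite, each fiber of $i_*$ has cardinality at most $|K|$, and since $\mathcal{R}(\varphi|_H)$ is infinite, the image $i_*(\mathcal{R}(\varphi|_H))$ must also be infinite, forcing $R(\varphi) = \infty$. As $\varphi$ was arbitrary, $G$ has property $R_\infty$. The main (modest) obstacle is verifying that the fibers of $i_*$ coincide with the $K$-orbits; this is a direct unwinding of definitions, together with the observation that the $H$-part of the action is trivial by construction of Reidemeister classes of $\varphi|_H$.
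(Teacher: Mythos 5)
Your overall strategy --- push $\mathcal{R}(\varphi|_H)$ into $\mathcal{R}(\varphi)$ via $i_*$ and bound the fibers by $|K|$ --- is the standard argument and is what the paper intends (the paper gives no proof, only the citation to [GW1]). But there is a genuine error in the middle step: the formula $g\cdot [h]_{\varphi|_H} := [\,g h \varphi(g)^{-1}\,]_{\varphi|_H}$ does \emph{not} define an action of $G$ on $\mathcal{R}(\varphi|_H)$. Normality of $H$ and $\varphi(H)=H$ give $ghg^{-1}\in H$, but $g h\varphi(g)^{-1} = (ghg^{-1})\cdot\bigl(g\varphi(g)^{-1}\bigr)$, and $g\varphi(g)^{-1}$ lies in $H$ only when the induced automorphism $\overline{\varphi}$ of $K=G/H$ fixes the coset $gH$. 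So whenever $\overline{\varphi}\neq \mathrm{id}_K$, the element $gh\varphi(g)^{-1}$ leaves $H$ for some $g$, and your action --- and with it the claim that it ``factors through $K$'' --- is not defined.

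The argument is easily repaired and the conclusion survives. If $h_1,h_2\in H$ satisfy $h_2=gh_1\varphi(g)^{-1}$ for some $g\in G$, then projecting to $K$ gives $\bar{1}=\bar{g}\,\overline{\varphi}(\bar{g})^{-1}$, so any such $g$ automatically lies in $G_\varphi$, the preimage in $G$ of $\mathrm{Fix}(\overline{\varphi})\le K$. Your formula does define an action of the subgroup $G_\varphi$ on $\mathcal{R}(\varphi|_H)$ (well-definedness on classes is the computation you indicate), it is trivial on $H$, and hence it factors through $G_\varphi/H=\mathrm{Fix}(\overline{\varphi})$, a finite group of order at most $|K|$. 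The fibers of $i_*$ are exactly the orbits of this action, so each has cardinality at most $|K|$, and the counting argument yielding $R(\varphi)=\infty$ goes through unchanged.
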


Recall that a subset $P \subset S=\mathbb R^n\setminus 0/\!\sim$ is a {\it spherical polytope} if there exist $v_1,\ldots,v_m\in 
\mathbb R^n$ such that (i) all the $v_j$ are in a half-space, that is, 
there exists a linear map $f:\mathbb R^n\to \mathbb R$ such that $f(v_j)>0~\forall j$, (ii)  
$P=
\{[v]\in S\mid v=\sum_{1\le j\le m} a_jv_j,~a_j\ge 0, v\ne 0\}$ and (iii) $v_1,v_2,\ldots, v_m$ 
 is minimal set of such 
vectors; equivalently no $v_i$ is a positive linear combination of a subset of $v_j, j\ne i$. 
Then $[v_1],\ldots,[v_m]$ are the {\it vertices} of $P$.   If $\alpha$ is any homeomorphism of $S$ induced by an 
automorphism of the vector space $\mathbb R^n$ such that $\alpha(P)=P,$ permutes the vertices of $P$.  

\begin{definition}\label{SO}
Let $\SO$ denote the class of all finitely generated groups which satisfy the following two conditions:
\begin{enumerate}
\item $\Sigma^1(G)^c$ is non-empty and 
lies inside an open hemisphere of the character sphere $S(G)$; 
\item the connected components of $\Sigma^1(G)^c$ are finite 
spherical polytopes with transcendental vertices.
\end{enumerate}
\end{definition}

Now let $G$ be any group, not necessarily finitely generated. Suppose that the abelianization $G^{\ab}\cong \mathbb Z^n\oplus T$ where $T\le G^{\ab}$ is the torsion subgroup.  Thus $G^\ab/T$ is free abelian of finite rank.     
One may still define the character sphere $S(G)$ exactly as for finitely generated groups and 
we note that it is homeomorphic to $S(G^{\ab})\cong \mathbb S^{n-1}$.  Moreover, if $\phi:G\to G$ is any automorphism, then $\phi$ induces 
a homeomorphism of $S(G)$.   A {\it  hemisphere} in $S(G)$ is defined as follows:  Suppose that $g\in G$ is not 
in $[G,G]$.  The hemispheres defined by $g$ are:
$H_g^+:=\{[\chi ]\in S(G)\mid \chi(g)>0\}$ and $H^-_g=\{[\chi]\in S(G)\mid \chi(g)<0\}$.
Clearly $H^\pm_g=H^\mp_{g^{-1}}$.  
The hemispheres are homeomorphic to the open balls in $S(G)$.  Moreover, 
if $\phi:G\to G$ is any automorphism, then $\phi^*:S(G)\to S(G)$ preserves the collection of hemispheres. 
These statements follow from the observation that any automorphism of $G$ induces an $\mathbb R$-linear automorphism of the ($n$-dimensional) vector space $\textrm{Hom}(G,\mathbb R)$.  In fact, automorphisms of $G$ preserve the $\mathbb Q$-structure 
$\textrm{Hom}(G;\mathbb Q)\subset \textrm{Hom}(G,\mathbb R)$.  It is readily seen that if $\chi$ is transcendental (resp. rigid), so is $\phi^*(\chi)=\chi\circ \phi$.

When $G$ is not finitely generated, the $\Sigma$-invariant $\Sigma^1(G)$ as in \cite{BNS} is not available. 
Although the definition due to K. S. Brown  \cite{Br} is applicable, we will not need it for our present purposes.

Let $K\lhd G$ be a {\it characteristic} subgroup of a group $G$ such that $\bar G:=G/K$ is a finitely 
generated infinite group.  We do not assume that $G$ is finitely generated.  
Then any automorphism $\phi$ of $G$ induces an automorphism $\bar \phi:\bar G\to \bar G$.  
This leads to a homomorphism $\Psi: \textrm{Aut}(G)\to \textrm{Aut}(\bar G)$.   Since $\bar G$ is finitely generated, one can apply $\Sigma$-theory to obtain $\Sigma^1(\bar G)\subset S(\bar G).$ 
Note that $\textrm{Aut}(G)$ acts on $S(\bar G)$ (via $\Psi$) preserving the sets $\Sigma^1(\bar G)$ and $\Sigma^1(\bar G)^c$ in $S(\bar G)$.  Observe that $\Sigma^1(\bar G)^c$ depends not only on $G$ but also 
on the choice of $K$ and is therefore not intrinsic to $G$.

\begin{definition}\label{snotfg} Let $\widetilde{\mathcal S}$ denote the class of all groups $G$ having a characteristic subgroup $K$ with quotient $G/K$ in $ \mathcal S$. 
\end{definition}

\begin{lemma}\label{average}
Let $G$ be any group such that $G^{\rm ab}$ has finite rank.
Suppose $\chi: G\to \mathbb R$ is a transcendental
character and $\phi:G\to G$ is an automorphism such that the
$\phi^*$-orbit of $[\chi]$ is finite. Let $\chi_j=\chi\circ \phi^j, 0\le j<r$
where $r>0$ is the least positive integer so that $[\chi_r]=[\chi]$.
Suppose that the $[\chi_j]$ are in an open hemisphere of $S(G)$.  Let
$\eta=\sum_{0\le j<r} \chi_j.$ Then $\eta$ is transcendental.\\
 \end{lemma}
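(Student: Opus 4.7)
The plan is to reduce everything to the already observed fact (stated in the paragraph preceding the lemma) that a subcharacter of a transcendental character is transcendental. Concretely, I would write $A=\im(\chi)\subseteq\mathbb R$; this is an additive subgroup of $\mathbb R$ because $\chi$ is a homomorphism. Since each $\phi^j$ is an automorphism of $G$, it is surjective, so $\im(\chi_j)=\chi(\phi^j(G))=\chi(G)=A$. Consequently every value $\eta(x)=\sum_{j=0}^{r-1}\chi_j(x)$ is a finite sum of elements of $A$, and therefore $\im(\eta)\subseteq A=\im(\chi)$.

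Next I would verify that $\eta$ is a non-trivial character. By hypothesis the classes $[\chi_0],\ldots,[\chi_{r-1}]$ all lie in a common open hemisphere $H_g^+=\{[\lambda]\in S(G)\mid \lambda(g)>0\}$ for some $g\in G\setminus [G,G]$. Hence $\chi_j(g)>0$ for every $j$, so $\eta(g)=\sum_{j=0}^{r-1}\chi_j(g)>0$, showing $\eta\neq 0$. This is the one point in the argument where the open-hemisphere assumption is used; without it, sign cancellations among the $\chi_j$ could in principle collapse $\eta$ to the zero character, and a trivial character is not usually counted as transcendental in the present setup.

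Finally, with $\eta$ established as a non-zero character whose image sits inside the image of the transcendental character $\chi$, I would directly invoke the fact recorded earlier in the excerpt: if $\chi$ is transcendental and $\chi'$ is any character with $\im(\chi')\subseteq\im(\chi)$, then $\chi'$ is transcendental. Applying this with $\chi'=\eta$ completes the proof. The only mildly delicate step is the non-vanishing $\eta\neq 0$, which is taken care of cleanly by the hemisphere hypothesis; beyond that, the argument is a direct unraveling of definitions together with the subgroup structure of $\im(\chi)\subseteq\mathbb R$. Note in particular that no appeal to rigidity of $\chi$ or to the equality $\chi_r=\chi$ is required for this lemma, since $\im(\chi_j)=\im(\chi)$ holds automatically from $\phi^j$ being an automorphism.
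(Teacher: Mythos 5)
Your proposal is correct and follows essentially the same route as the paper's proof: observe that $\mathrm{Im}(\eta)\subseteq\mathrm{Im}(\chi)$ (so transcendence is inherited, since it depends only on the image), and use the open-hemisphere hypothesis to guarantee $\eta\neq 0$. You simply spell out the two steps the paper leaves implicit, namely that $\mathrm{Im}(\chi_j)=\mathrm{Im}(\chi)$ because $\phi^j$ is an automorphism and that the hemisphere condition prevents cancellation at the witnessing element $g$.
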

\begin{proof} We note that ${\rm Im}(\eta)\subset {\rm Im}(\chi)$. So it suffices to
show that $\eta$ is non-zero.  But this follows from our hypothesis
that the $[\chi_j]$ are in an open hemisphere. \end{proof}

\begin{theorem}\label{SO-R}
If $G\in \widetilde{\mathcal S}$, then $G$ has property $R^{\chi}_{\infty}$.
\end{theorem}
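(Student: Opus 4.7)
The strategy is to produce, via $\Sigma$-theory on the quotient $\bar G := G/K$, a character of $\bar G$ invariant under all of ${\rm Aut}(\bar G)$; composing with the projection $\pi:G\to \bar G$ will then yield the sought character of $G$. This works because $K$ is characteristic, so every $\varphi\in {\rm Aut}(G)$ descends to $\bar\varphi\in {\rm Aut}(\bar G)$ with $\pi\circ\varphi=\bar\varphi\circ\pi$, and any $\eta$ on $\bar G$ fixed by $\bar\varphi$ pulls back to $\chi:=\eta\circ\pi$ on $G$ fixed by $\varphi$. It thus suffices to build a non-trivial $\eta:\bar G\to\mathbb R$ with $\eta\circ\bar\varphi=\eta$ for every $\bar\varphi\in{\rm Aut}(\bar G)$, which then witnesses $\Rcinf$ for $G$.

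Since $\bar G\in \mathcal S$, the complement $\Sigma^1(\bar G)^c$ decomposes as a finite union of spherical polytopes lying inside some open hemisphere $H^+_g\subset S(\bar G)$; let $V$ be the (finite) set of transcendental vertices of these polytopes. Set $A:={\rm Aut}(\bar G)$. The linear action of $A$ on ${\rm Hom}(\bar G,\mathbb R)$ preserves $\Sigma^1(\bar G)^c$ and its polytope decomposition, hence permutes $V$; the induced permutation action on $V$ factors through a finite quotient of $A$. Fix any $A$-orbit $O=\{v_0,v_1,\ldots,v_{k-1}\}\subset V$ and any representative character $\chi_0$ of $v_0$.

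The heart of the argument is a canonical choice of representative $\chi_i$ for each $v_i\in O$: given $\varphi\in A$ with $\varphi^*v_0=v_i$, set $\chi_i:=\chi_0\circ\varphi$. To see this is independent of the choice of $\varphi$, suppose $\varphi,\varphi'\in A$ both send $v_0$ to $v_i$; then $\chi_0\circ\varphi$ and $\chi_0\circ\varphi'$ represent the same class $v_i$, so $\chi_0\circ\varphi=c\,(\chi_0\circ\varphi')$ for some $c>0$. Because $\varphi,\varphi'$ are bijections, both characters have image equal to $\im(\chi_0)$, whence $c\cdot\im(\chi_0)=\im(\chi_0)$; rigidity of $v_0$ (implied by transcendence) forces $c=1$. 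Define
\[\eta:=\sum_{i=0}^{k-1}\chi_i.\]
For any $\alpha\in A$, the orbit permutation $\sigma$ given by $\alpha^*v_i=v_{\sigma(i)}$ satisfies $\chi_i\circ\alpha=\chi_{\sigma(i)}$ by the same rigidity argument applied at $v_{\sigma(i)}$, so $\eta\circ\alpha=\eta$. Non-vanishing is immediate from $O\subset V\subset H^+_g$: each $\chi_i(g)>0$, so $\eta(g)>0$.

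The principal obstacle is precisely this rigidity-based uniqueness step. The crucial observation is that $\im(\chi_0\circ\varphi)=\im(\chi_0)$ for every $\varphi\in A$, since automorphisms are bijections; this is what promotes projective invariance of $[\chi_0]$ into set-theoretic equality of its lifts, and thereby allows summing over an orbit to yield a character truly fixed by $A$, not merely fixed up to scalars on each summand. Without transcendence the argument collapses, as the earlier remark in the paper already shows that $[\chi]$ can be fixed by every element of ${\rm Aut}(\bar G)$ without being rigid.
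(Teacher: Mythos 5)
Your proposal is correct and follows essentially the same route as the paper: pass to the quotient $\bar G = G/K$ by the characteristic subgroup, observe that $\mathrm{Aut}$ permutes the finitely many transcendental vertices of the polytopes in $\Sigma^1(\bar G)^c$, sum the canonically chosen representatives over an orbit (the paper's Lemma \ref{average}), use the open-hemisphere hypothesis to see the sum is non-zero, and pull back along $G\to\bar G$. Your explicit rigidity argument for choosing well-defined orbit representatives under the full automorphism group is in fact a slightly more careful rendering of the averaging step than the paper's appeal to Lemma \ref{average}, which is stated only for the cyclic orbit of a single automorphism.
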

\begin{proof} Let $K\lhd G$ be a characteristic subgroup such that $\bar G:=G/K$ is in $\mathcal S.$ 
As noted above, $\textrm{Aut}(G)$ acts on $S(\bar G)$ leaving $\Sigma^1(\bar G)^c$ invariant.   
Since $\bar G\in \mathcal S$, $\Sigma^1(\bar G)^c$ is a non-empty (finite) union of spherical polytopes.  Moreover, the group 
$\textrm{Aut}(G)$ acts on the 
set of 
vertices of $\Sigma^1(G)^c$. 
Since these vertices are transcendental and all of them are contained in an open half-space, by Lemma \ref{average}, we can find a transcendental character  $\chi: \bar G \to 
\mathbb R$ that is fixed by all automorphisms of $G$.  It follows that  the character $G\to \bar G\stackrel{\chi}{\longrightarrow} \mathbb R$ is fixed by $\textrm{Aut}(G)$.   Hence $G$ has property $R^{\chi}_{\infty}$.
\end{proof}

Denote by $b_1(\Gamma)$ the first Betti number of a group $\Gamma$.

\begin{lemma}\label{betti}
Given the extension \eqref{f-ext}, if $b_1(H)=b_1(G)$ and $G\in \SO$ then $H\in \SO$ and hence has property $R^{\chi}_{\infty}$.
\end{lemma}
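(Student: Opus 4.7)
The plan is to show that when $b_1(H)=b_1(G)$, the $\Sigma^1$-complement of $H$ can be identified with that of $G$, so all three structural features defining $\mathcal{S}$ transfer verbatim from $G$ to $H$.  The property $R^\chi_\infty$ then follows from Theorem \ref{SO-R} (taking the characteristic subgroup in Definition \ref{snotfg} to be trivial).

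First I would unpack the hypothesis at the level of character spheres.  Because $K$ is finite and $\mathbb R$ is torsion-free, the restriction map $\textrm{Hom}(G,\mathbb R)\to\textrm{Hom}(H,\mathbb R)$, $\chi\mapsto \chi|_H$, is injective, and its image lies in the subspace $\textrm{Fix}(\hat\nu)$ of Proposition \ref{sigma-finite}.  Since $\dim\textrm{Hom}(\Gamma,\mathbb R)=b_1(\Gamma)$ for any finitely generated $\Gamma$, the equality $b_1(H)=b_1(G)$ forces this injection to be an $\mathbb R$-linear isomorphism.  In particular $\textrm{Fix}(\hat\nu)=\textrm{Hom}(H,\mathbb R)$ and hence $\partial_\infty\textrm{Fix}(\hat\nu)=S(H)$.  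Proposition \ref{sigma-finite} with $n=1$ then yields $\Sigma^1(G)=\Sigma^1(H)$, and so $\Sigma^1(G)^c=\Sigma^1(H)^c$, under the identification of $S(G)$ with $S(H)$ induced by restriction.

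Next I would check the two defining conditions of $\mathcal{S}$ for $H$.  Condition (1): the identification $S(G)\cong S(H)$ is induced by an $\mathbb R$-linear isomorphism of the $\textrm{Hom}$ spaces, so it sends open hemispheres to open hemispheres; hence $\Sigma^1(H)^c$, being the same subset as $\Sigma^1(G)^c$, is non-empty and lies inside an open hemisphere of $S(H)$.  Condition (2): the linear isomorphism also takes spherical polytopes to spherical polytopes and vertices to vertices.  It remains to verify that the vertices remain transcendental after restriction.  If $\chi:G\to\mathbb R$ is a transcendental vertex, then $\chi|_H\ne 0$ (by injectivity of the restriction map) and $\textrm{Im}(\chi|_H)\subset \textrm{Im}(\chi)$; any ratio of two non-zero elements of $\textrm{Im}(\chi|_H)$ is therefore a ratio of two non-zero elements of $\textrm{Im}(\chi)$, which is rational or transcendental.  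Thus $\chi|_H$ is transcendental, and every vertex of every component of $\Sigma^1(H)^c$ is transcendental.

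This gives $H\in\mathcal{S}\subset\widetilde{\mathcal S}$, and Theorem \ref{SO-R} then delivers property $R^\chi_\infty$ for $H$.  I do not foresee any genuine obstacle here: the entire argument is a careful bookkeeping of the identification $S(G)=S(H)$ provided by the Betti number hypothesis together with the transfer statement in Proposition \ref{sigma-finite}; the only point requiring a moment's thought is the preservation of transcendentality under restriction, which follows immediately from the fact (noted in the paper after Definition \ref{rigid}) that transcendentality passes to subgroups of the image.
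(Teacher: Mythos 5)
Your proof is correct and follows essentially the same route as the paper's: identify $S(G)$ with $S(H)$ via the Betti number hypothesis, show $\partial_\infty{\rm Fix}(\hat\nu)=\partial_\infty{\rm Hom}(H,\mathbb R)$, and apply Proposition \ref{sigma-finite} to transfer the $\Sigma$-invariants before invoking Theorem \ref{SO-R}. The only difference is that you derive the equality ${\rm Fix}(\hat\nu)={\rm Hom}(H,\mathbb R)$ directly from a dimension count and spell out the preservation of the polytope structure and of transcendentality of vertices, whereas the paper cites \cite{KW2} for the former and leaves the latter implicit.
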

\begin{proof} Since $b_1(H)=b_1(G)$, we conclude that the character sphere of $G$ coincides with the character sphere of $H$, that is, $S(G)=S(H)$. By \cite[Prop. 2.1, 2.3]{KW2}, 
$\partial_{\infty}{{\rm Fix} (\hat \nu)}=\partial_{\infty}{\rm Hom}(H,\mathbb R)$. It follows from Prop. \ref{sigma-finite} that $G$ and $H$ have the same $\Sigma$ invariants. Since $G\in \SO$ it follows that $H\in \SO$ and the last assertion follows from Theorem \ref{SO-R}.
\end{proof}

\begin{remark} It should be emphasized that if $G$ (and hence $H$ under the assumption $b_1(H)=b_1(G)$) has empty or symmetric (e.g. $\Sigma^1(\pi_1(M))=-\Sigma^1(\pi_1(M)$ where $M$ is a closed orientable $3$-manifold \cite{BNS}) $\Sigma$ - invariants then we simply cannot deduce any information regarding property $R_{\infty}$. For example, consider the classical lamplighter groups $L_n=\mathbb Z_n\wr \mathbb Z$. It is known \cite{GW1} that $L_n$ has property $R_{\infty}$ iff $\gcd (n,6)>1$. However, $\Sigma^1(L_n)=\emptyset$ for any $n\in \mathbb N$. Another such example is the fundamental group $\Gamma$ of a non-prime $3$-manifold where $\Gamma$ has property $R_{\infty}$ \cite{GSW2} but $\Sigma^1(\Gamma)=\emptyset$. Furthermore, if $M$ is a closed orientable $3$-manifold with $\mathbb H^2\times \mathbb R$ geometry then $\pi_1(M)$ has property $R_{\infty}$ \cite{GSW1} while the fundamental group of the $3$-torus does not. Here, both fundamental groups have non-empty symmetric $\Sigma^1$.
\end{remark}

We shall now prove Theorem \ref{comm1}. 

\begin{proof} Let $\hat G$ be commensurable to $G$ so that there exist $H\le G, \hat H\le \hat G$ such that $[G:H]<\infty, [\hat G:\hat H]<\infty$ and $\hat H\cong H$. Let $C_H$ be the core of $H$ in $G$ so that $C_H\le H$ and $C_H\unlhd G$. Since $H$ is of finite index in $G$ so is $C_H$. By Lemma \ref{betti}, we conclude that $C_H\in \SO$. Now $b_1(C_H)=b_1(H)=b_1(G)$. Furthermore, $H$ has the same $\Sigma$ - invariants as $G$ so we conclude that $H\in \SO$. Since $\hat H \cong H$, $\hat H\in \SO$. Now $\Gamma_{\hat H}:=\bigcap_{\varphi \in {\rm Aut}(\hat G)} \varphi(C_{\hat H})$ also has finite index in $\hat G$ and is characteristic in $\hat G$. Note that $\Gamma_{\hat H}$ is isomorphic to a subgroup $\bar H\le H$ of finite index in $H$. It follows from the assumption that $b_1(\bar H)=b_1(G)$, the subgroup $\bar H\in \SO$. Now $\Gamma_{\hat H}$ has property $R^{\chi}_{\infty}$. Applying Lemma \ref{Rinfty_for_finite}, we conclude that $\hat G$ has property $R_{\infty}$.
\end{proof} 

\begin{remark}
Lemma \ref{Rinfty_for_finite} does not necessarily imply $R^{\chi}_{\infty}$ for the extension unless it has the same $\Sigma$ - invariants as the kernel. Thus, in the proof of Theorem \ref{comm1}, if we know for instance that $b_1(\Gamma_{\hat H})=b_1(\hat G)$ then we can conclude that $\hat G$ also has property $R^{\chi}_{\infty}$.
\end{remark}

\begin{example}\label{BS}
Recall that property $R_{\infty}$ is a quasi-isometric invariant for the class of solvable Baumslag-Solitar groups (and their solvable  generalizations)\cite{TW1}. It is known (see e.g., \cite{KW2}) 
that $\Sigma^1(BS(1,n))=\{-\infty\}$ contains exactly one rational point and $b_1(BS(1,n))=1$. Furthermore, if $H$ is a finite index subgroup of $BS(1,n)$ then $H$ itself is a $BS(1,n^m))$ (see e.g. \cite{Bo}) so that $b_1(H)=1$. Thus Theorem \ref{comm1} gives a different proof of the fact that $R_{\infty}$ is invariant under commensurability for the class of solvable Baumslag-Solitor groups.
\end{example}

\begin{example}\label{gamma-n}
For any $n\ge 2$, write $n=p_1^{y_1}...p_r^{y_r}$ as its prime decomposition. Define a solvable generalization of the solvable Baumslag-Solitar groups by
$$\Gamma_n=\langle a,t_1,...,t_r \mid t_it_j=t_jt_i, t_iat_i^{-1}=a^{p_i^{y_i}}, i=1,...,r.\rangle.$$
Evidently, when $r=1$, $\Gamma_n=BS(1,n)$. In \cite{SgW}, it has been shown that $\Sigma^1(\Gamma_n)^c$ is a finite set of rational points all lying inside an open hemisphere so that $\Gamma_n\in \SO$. Moreover, a presentation is also found for any finite index subgroup $H$ of $\Gamma_n$. Using this presentation, one can show that $b_1(H)=b_1(\Gamma_n)=r$.
Thus Theorem \ref{comm1} gives a different proof of the fact that $R_{\infty}$ is invariant under commensurability for this class of generalized solvable Baumslag-Solitor groups.
\end{example}

Next, we exhibit more examples for which $b_1(H)=b_1(G)$. Note that in general, $b_1(G)\le b_1(H)$.

\begin{example}\label{semi-simple}
Let $G$ be a connected semi-simple Lie group
having real rank at least $2$ and $\Gamma$ be an irreducible lattice in $G$. For every finite index subgroup $H$ in $\Gamma$, $b_1(H)=b_1(\Gamma)=0$. However, in this case, $S(H)=\emptyset=S(\Gamma)$ and hence both $H$ and $\Gamma$ have empty $\Sigma$ - invariants.
\end{example}

\begin{example}\label{PL}
Certain subgroups of ${\rm PL}_{o}([0,1])$ (oriented PL-homeomorphism group of $[0,1]$) possess such property \cite[section 6]{GSS}.
\end{example}

\begin{example}\label{inner}
Suppose $G=H\rtimes_{\theta}K$ where $K$ is finite and 
$\theta:K\to {\rm Aut}(H)$ is the action. If $\theta(K)\subset {\rm Inn}(H)$ then $b_1(H)=b_1(G)$. From Stallings' $5$-term exact sequence, we have the following exact sequence
$$
H_2(K) \to H/[G,H] \to H_1(G) \to H_1(K)\to 0.
$$
Since $K$ is finite, both $H_2(K)$ and $H_1(K)$ are finite. It follows that 
$${\rm rk}_{\mathbb Z}\left(H/[G,H]\right)={\rm rk}_{\mathbb Z}(H_1(G))=b_1(G).
$$
Since $[H,H]\le [G,H]$, it suffices to show that $[H,H]=[G,H]$ under our assumptions. For any $g\in G$, $g$ can be uniquely written as $g=\hat h \bar k$ where $\bar k$ is the image of $k\in K$ under the section given by the splitting. For any $h\in H$,
\begin{equation*}
\begin{aligned}
ghg^{-1}h^{-1}&=\hat h\bar kh{\bar k}^{-1}{\hat h}^{-1}h^{-1} \\
                       &=\hat h \theta(k)(h){\hat h}^{-1}h^{-1} \\
                       &=\hat h \eta h\eta^{-1}{\hat h}^{-1}h^{-1} \qquad \text{for some $\eta\in H$ since $\theta(k)\in {\rm Inn}(H)$}\\
                       &=(\hat h\eta)h(\hat h\eta)^{-1}h^{-1} \in [H,H]
\end{aligned}
\end{equation*}
It follows that $[G,H]=[H,H]$ and hence we have $b_1(H)=b_1(G)$.
\end{example}

\begin{lemma}\label{simple_subgp}
Let $G$ be any group. Suppose the commutator subgroup $[G,G]$ contains an infinite simple group $K$ with $[[G,G]:K]<\infty$. Then for any finite index subgroup $H$ of $G$, $b_1(H)=b_1(G)$.
\end{lemma}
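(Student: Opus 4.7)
The plan is to show that $K$ lies inside every finite-index subgroup $H$ of $G$, and in fact $K \le [H,H]$, so that $[H,H]$ has finite index in $[G,G]$; a rank comparison on abelianizations then finishes the proof.

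First I would establish the key structural input, namely that an infinite simple group has no proper finite-index subgroup. Indeed, any finite-index subgroup $L \le K$ contains its normal core $\bigcap_{k\in K} kLk^{-1}$, which by the simplicity of $K$ must be either trivial or all of $K$; the trivial alternative would embed $K$ into the finite symmetric group on $K/L$, contradicting $|K|=\infty$. Applying this with $L = H\cap K$ (of finite index in $K$, since $[G:H]<\infty$) yields $K\le H$. Because an infinite simple group is necessarily non-abelian (the only abelian simple groups are the $\mathbb{Z}/p\mathbb{Z}$), $[K,K]$ is a non-trivial normal subgroup of $K$, so $[K,K]=K$; together with $K\le H$ this gives $K\le [H,H]$.

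Next, from $K \le [H,H] \le [G,G]$ and $[[G,G]:K] < \infty$ I obtain $[[G,G]:[H,H]] < \infty$, and in particular $(H \cap [G,G])/[H,H]$ is finite. The natural inclusion $H\hookrightarrow G$ composed with $G\twoheadrightarrow G^{\ab}$ has kernel $H\cap [G,G]$ and image $H[G,G]/[G,G]\le G^{\ab}$, which is of finite index in $G^{\ab}$ because $[G:H]<\infty$. The resulting short exact sequence
\begin{equation*}
0 \to (H\cap [G,G])/[H,H] \to H^{\ab} \to H/(H\cap [G,G]) \to 0
\end{equation*}
has a finite left-hand term and a right-hand term of $\mathbb{Z}$-rank equal to $b_1(G)$, so taking ranks gives $b_1(H) = b_1(G)$.

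There is no serious obstacle. The argument rests on two elementary facts about infinite simple groups, that they are perfect and have no proper finite-index subgroups; both follow directly from simplicity. The only point meriting care is that the kernel $(H\cap [G,G])/[H,H]$ of the induced map $H^{\ab}\to G^{\ab}$ must be \emph{finite}, not merely of finite index; this is precisely what the hypothesis $[[G,G]:K]<\infty$ guarantees once one knows $K\le [H,H]$.
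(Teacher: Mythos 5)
Your proposal is correct and follows essentially the same route as the paper: both arguments use a normal-core/simplicity argument to show $K$ lies in every finite-index subgroup $H$, then use perfectness of $K$ to get $K=[K,K]\le [H,H]\le [G,G]$ and conclude that $[H,H]$ has finite index in $[G,G]$. The only difference is cosmetic: the paper finishes by citing Stallings' five-term exact sequence, whereas you give a direct and self-contained rank computation via the exact sequence $0 \to (H\cap [G,G])/[H,H] \to H^{\ab} \to H/(H\cap [G,G]) \to 0$, which is arguably cleaner since $H$ need not be normal in $G$.
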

\begin{proof} To see this, first note that for every finite index subgroup $H$, its core $core_G(H)=C_H\le H$ is normal and has finite index in $G$. Now, $K\cap C_H$ has finite index in $K$ so $K\cap C_H$ is non-trivial. Since $K$ is simple and $core_K(K\cap C_H)\le K\cap C_H \le K$, it follows that $K\cap C_H=K$ so $K\le H$. Again, $K$ being simple means that $K=[K,K]$. Since $K=[K,K]\le [H,H] \le [G,G]$ and $K$ has finite index in $[G,G]$, we conclude that $[H,H]$ has finite index in $[G,G]$. It follows from Stallings' $5$-term exact sequence that $b_1(H)=b_1(G)$. Note that the argument above shows that {\it every} finite index subgroup of $G$ contains the simple group $K$. 
\end{proof}

\begin{remark}
The hypotheses of Lemma \ref{simple_subgp} are satisifed by a large class of groups. In particular, for $n\ge 2$, the Houghton groups $H_n$ satisfy the conditions of Lemma \ref{simple_subgp} with $K=A_{\infty}$.  Furthermore, let $S$ be a self-similar group and $G=V(S)$ be the associated Nekrashevych group. Then $[V(S), V(S)]$ is simple (see for instance \cite{N}). In fact, under certain conditions, $[G,G]$ can be of finite index in $G$ (\cite[Theorem 3.3]{SWZ}). Thus, by Lemma \ref{simple_subgp}, these aforementioned groups $G$ have the property that $b_1(H)=b_1(G)$  for all finite index subgroup $H$ in $G$. 
\end{remark}

The R. Thompson's group $F$ is known to have property $R_{\infty}$ \cite{BFG}. A different proof, using $\Sigma$-theory, has been given in \cite{GK}. In fact, one can conclude that $F\in \SO$ so $F$ has property $R^{\chi}_{\infty}$. Now, the next result follows from Lemma \ref{simple_subgp}, the fact that $[F,F]$ is simple and Theorem \ref{comm1} that any group commensurable to $F$ also has property $R_{\infty}$.

\begin{theorem}\label{Thompson_F}
Consider the R. Thompson's group $F$. Then any group commensurable to $F$ also has property $R_{\infty}$. 
\hfill $\Box$ 
\end{theorem}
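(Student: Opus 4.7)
The plan is to invoke Theorem \ref{comm1} directly with $G = F$. Two hypotheses must be checked: (i) $F \in \SO$, and (ii) every finite index subgroup $H \le F$ satisfies $b_1(H) = b_1(F)$.

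Hypothesis (i) is already recorded in the paragraph preceding the statement: the $\Sigma$-theoretic computation of \cite{GK} shows that $\Sigma^1(F)^c$ consists of two discrete (and in fact rational, hence transcendental) points lying inside an open hemisphere of the character sphere $S(F) \cong \mathbb{S}^1$, which is exactly what is required by Definition \ref{SO}. So I would simply cite this as the verification of $F \in \SO$.

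For hypothesis (ii), the plan is to apply Lemma \ref{simple_subgp} with $K := [F,F]$. The commutator subgroup $[F,F]$ of R.~Thompson's group $F$ is a classical infinite simple group, so the inclusion $K \le [F,F]$ is the identity and the index hypothesis $[[F,F]:K] < \infty$ holds trivially (with index one). Lemma \ref{simple_subgp} then delivers $b_1(H) = b_1(F)$ for every finite index subgroup $H \le F$, as required.

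With both hypotheses in hand, Theorem \ref{comm1} applies verbatim and yields the conclusion that every group commensurable to $F$ has property $R_\infty$. The proof is a direct concatenation of previously established results, so no real obstacle is anticipated; the only subtlety worth flagging is that in hypothesis (i) one must confirm that the two points of $\Sigma^1(F)^c$ genuinely lie in a common open hemisphere (so as to produce a transcendental invariant character via the averaging in Lemma \ref{average}), but this is exactly the content of the $\Sigma$-theoretic description in \cite{GK} that is already being cited.
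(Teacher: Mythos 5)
Your proposal is correct and follows essentially the same route as the paper: the authors likewise deduce the theorem from Theorem \ref{comm1}, using the $\Sigma$-theoretic computation of \cite{GK} to place $F$ in $\SO$ and Lemma \ref{simple_subgp} applied to the infinite simple group $[F,F]$ to get $b_1(H)=b_1(F)$ for every finite index subgroup $H$. The extra details you supply (the two rational, hence transcendental, points of $\Sigma^1(F)^c$ lying in a common open hemisphere) are accurate and merely make explicit what the paper leaves to the citation.
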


\begin{remark}
Example \cite[5.5]{KW2} follows immediately from Theorem \ref{Thompson_F}. The generalized Thompson's groups $F_{0,n}$ have property $R_{\infty}$ and every group commensurable to one such also has property $R_{\infty}$. This result, including Theorem \ref{Thompson_F}, has been proven in \cite{GSS} using different methods.
\end{remark}

Another large class of interesting groups for which finite index subgroups have the same first Betti numbers is the class of lamplighter groups of the form $H\wr \mathbb Z$ where $H$ is a finite group.  Since lamplighter groups have empty $\Sigma^1$, these groups exhibit different behavior as we illustrate in the next example.

\begin{example}\label{strange-lamplighter}
Let $p\ge 5$ be an odd prime. It follows from \cite{GW1} that $G=\mathbb Z_p\wr \mathbb Z$ does not have property $R_{\infty}$.   Moreover, no finite index subgroup of $G$ has property $R_{\infty}$.  Since every subgroup of finite index in $G$ is of the form $(\mathbb Z_p)^k\wr \mathbb Z$ for some $k\in \mathbb N$, it follows from the main theorem of \cite{GW1} that such subgroup does not have property $R_{\infty}$. (See Remark 2.)
\end{example}

\section{Invariance under ${\rm Aut}(H)$}
Consider the Artin braid group $B_3$ (on the disk) and its pure braid group $P_3$ on $3$ strands. The group $P_3$ is a normal subgroup of index $6$ in $B_3$. Moreover, $P_3\cong F_2\times \mathbb Z$ where $F_2$ is the free group on $2$ generators and $\mathbb Z$ is generated by the central element $\Delta$ which is the full-twist of the $3$ strands. It follows that $b_1(P_3)=3$ and $b_1(B_3)=1$. By Prop. \ref{sigma-finite}, $\Sigma^1(B_3)=\Sigma^1(P_3)\cap \partial_{\infty} {\rm Fix} (\hat \nu)$. Since $[B_3,B_3]$ is finitely generated, $\Sigma^1(B_3)=\{\pm \infty\}$. Furthermore, $P_3$ has property $R_{\infty}$ (see e.g. \cite{FGW}).  Although $B_3$ and $P_3$ both have property $R_{\infty}$, neither of them belongs to $\SO$. Observe that every automorphism of $B_3$ restricts to an automorphism of $P_3$. This leads us to investigate when $\Sigma^n(G)$ is invariant under ${\rm Aut}(H)$.

Based on Prop. \ref{sigma-finite}, one should seek conditions under which $\partial_{\infty}{\rm Fix} (\hat \nu)$ is invariant under automorphisms of $H$. Recall that for any left transversal $\nu:K\to G$ such that $\nu(1_K)=1_G$,
$$
{\rm Fix} (\hat \nu)=\{\phi\in {\rm Hom}(H,\mathbb R)\mid \phi(\nu(q)^{-1}h\nu(q))=\phi(h), \forall h\in H, \forall q\in K\}.
$$
For every $q\in K$, define $\alpha_q\in {\rm Aut}(H)$ by $\alpha_q(h)=\nu(q)^{-1}h\nu(q)$. It follows that ${\rm Fix} (\hat \nu)=\{\phi\in {\rm Hom}(H,\mathbb R)\mid \phi \circ \alpha_q=\phi, \forall q\in K\}$. Denote by 
$\overline{\alpha_q}\in {\rm Out}(H)$ the image of $\alpha_q$ in ${\rm Out}(H)$.

\begin{proposition}\label{central-out}
Given a short exact sequence
$$
1\to H\to G\to K\to 1
$$
and a left transversal $\nu:K\to G$ with $\nu(1_K)=1_G$, if for every $q\in K$, $\overline{\alpha_q}\in Z({\rm Out}(H))$, the center of ${\rm Out}(H)$ then for any $\varphi \in {\rm Aut}(H)$, we have $\varphi(\Sigma^n(G))=\Sigma^n(G)$. Furthermore, if $G\in \SO$ and if $\varphi^*(S(G))=S(G)$ for some $\varphi \in {\rm Aut}(H)$  then $R(\varphi)={\infty}$.
\end{proposition}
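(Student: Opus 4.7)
The plan is to combine Proposition \ref{sigma-finite} with the centrality assumption to transport $\Sigma^n$-invariance from $H$ to $G$, and then feed the resulting structure into Lemma \ref{average} to extract a $\varphi$-invariant character of $H$ with infinite image.

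First, I would handle the invariance of $\Sigma^n(G)$. By Proposition \ref{sigma-finite}, $\Sigma^n(G) = \Sigma^n(H) \cap \partial_{\infty}{\rm Fix}(\hat{\nu})$. Since any $\varphi \in {\rm Aut}(H)$ preserves $\Sigma^n(H)$ (a standard invariance of $\Sigma$-theory under automorphisms of the ambient group), it suffices to show that $\varphi^*$ preserves ${\rm Fix}(\hat{\nu})$. The centrality hypothesis $\overline{\alpha_q} \in Z({\rm Out}(H))$ provides, for each $q \in K$, an element $h_q \in H$ with $\varphi \circ \alpha_q = c_{h_q} \circ \alpha_q \circ \varphi$, where $c_{h_q}$ denotes conjugation by $h_q$. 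Since any homomorphism $\phi: H \to \mathbb{R}$ factors through $H^{\rm ab}$ and thus vanishes on inner automorphisms, a direct computation using this relation and the equality $\phi \circ \alpha_q = \phi$ for $\phi \in {\rm Fix}(\hat{\nu})$ yields $(\phi \circ \varphi) \circ \alpha_q = \phi \circ \varphi$. Hence $\varphi^*({\rm Fix}(\hat{\nu})) \subseteq {\rm Fix}(\hat{\nu})$, and the same argument applied to $\varphi^{-1}$ gives equality, so $\varphi^*(\Sigma^n(G)) = \Sigma^n(G)$.

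For the second assertion, assume $G \in \SO$ and $\varphi^*(S(G)) = S(G)$. By what was just proved, $\varphi^*$ preserves $\Sigma^1(G)^c$, which by definition of $\SO$ is a finite union of spherical polytopes whose vertices are transcendental characters sitting in an open hemisphere. Hence $\varphi^*$ permutes this finite vertex set. Picking any vertex $[\chi]$ with lift $\chi: G \to \mathbb{R}$, its $\varphi^*$-orbit is finite and still contained in the open hemisphere, so Lemma \ref{average} produces a transcendental character $\eta: G \to \mathbb{R}$ with $\eta \circ \varphi = \eta$. The restriction $\eta|_H$ is a non-trivial character of $H$ (since $[G:H]\cdot \eta(g) \in {\rm Im}(\eta|_H)$ for every $g\in G$, forcing ${\rm Im}(\eta|_H)$ to be of finite index in the infinite group ${\rm Im}(\eta)$) and satisfies $\eta|_H \circ \varphi = \eta|_H$. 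Setting $N = \ker(\eta|_H) \lhd H$, the automorphism $\varphi$ preserves $N$ and descends to the identity on the infinite abelian quotient $H/N \hookrightarrow \mathbb{R}$. A routine surjection argument (two elements in the same $\varphi$-twisted conjugacy class of $H$ have equal images in $H/N$, because the descent $\bar\varphi$ is trivial and the quotient is abelian) then shows that $\mathcal R(\varphi)$ surjects onto the infinite set $H/N$, giving $R(\varphi) = \infty$.

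The main obstacle is the bookkeeping between characters of $H$, on which $\varphi^*$ acts directly, and characters of $G$, where the $\SO$-structure and Lemma \ref{average} live. The centrality assumption is precisely what forces $\varphi^*$ to preserve the subsphere $\partial_{\infty}{\rm Fix}(\hat{\nu}) \cong S(G)$ of $S(H)$, and this identification is what allows Lemma \ref{average} to be applied at the $G$-level while the resulting fixed character is reinterpreted on $H$ to draw the Reidemeister conclusion. Step 1 really is the crux; once a $\varphi$-invariant transcendental character of $H$ is in hand, the rest is a packaging of standard $R_\infty$-arguments.
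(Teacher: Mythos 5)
Your proposal is correct and follows essentially the same route as the paper: Proposition \ref{sigma-finite} plus the centrality hypothesis (automorphisms composed with $\alpha_q$ commuting up to an inner automorphism, which characters kill) to get invariance of ${\rm Fix}(\hat \nu)$ and hence of $\Sigma^n(G)$, and then the $\SO$-structure to produce a $\varphi$-fixed rigid character yielding $R(\varphi)=\infty$. Your second half is in fact slightly more explicit than the paper's, which simply invokes the existence of an ${\rm Aut}(G)$-fixed rigid character and asserts it is also $\varphi^*$-fixed, whereas you rerun the averaging of Lemma \ref{average} over the $\varphi^*$-orbit of a vertex and spell out the descent to the abelian quotient $H/N$; both are valid.
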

\begin{proof}
Given any $\varphi\in  {\rm Aut}(H)$, there is an induced isomorphism $\hat \varphi$ on ${\rm Hom}(H,\mathbb R)$ given by $\hat \varphi(\phi)=\phi\circ \varphi$ for any $\phi\in {\rm Hom}(H,\mathbb R)$. Suppose $\phi \in {\rm Fix} (\hat \nu)$. For $\hat \varphi(\phi)\in {\rm Fix} (\hat \nu)$, we must have $\hat \varphi(\phi) \circ \alpha_q=\hat \varphi(\phi)$ for every $q\in K$. It follows that
$$
\phi\circ \varphi \circ \alpha_q=\phi\circ \varphi=\phi \circ \alpha_q\circ \varphi
$$
must hold for all $q\in K$. This equality holds if the automorphisms $\varphi \circ \alpha_q$ and $\alpha_q\circ \varphi$ differ by an inner automorphism. This holds under the assumption that $\overline{\alpha_q}$ lies in the center $Z({\rm Out}(H))$ for every $q\in K$. Now the invariance of $\Sigma^n(G)$ under ${\rm Aut}(H)$ follows from Prop. \ref{sigma-finite}. Since $G\in \SO$ there exists a rigid character $\chi$ that 
is fixed by all automorphisms of $G$. Since this character is obtained from the $\Sigma$ - invariants of $G$ which are invariant under ${\rm Aut}(H)$ and the subsphere $S(G)\subset S(H)$ is invariant under $\varphi$, we conclude that $\chi$ is also fixed by $\varphi^*$. It follows that $R(\varphi)={\infty}$.
\end{proof}

\section{More groups with $R^{\chi}_{\infty}$ or $R_{\infty}$}
Recall from Definition \ref{snotfg} the class of groups $\widetilde {\mathcal S}$.   A group $G$ is in $\widetilde{\mathcal S}$ 
if there exists a characteristic subgroup $K\triangleleft G$ such that $G/K$ belongs to $\mathcal S$.  It is not 
assumed that $G$ is finitely generated.   
 In this section  
we construct many families of groups (not necessarily finitely generated) that are direct products or free products of $G,H$ with $G\in \mathcal S$ where $H$ is a group belonging to certain families of groups described below.

(i)  {\bf Divisible groups.} Recall that a group $G$ is divisible if given any element $g\in G$ and an integer $n>1$, there exists an $h\in G$ 
such that $g=h^n$.  Examples of divisible abelian groups are $\mathbb Q^m\times (\mathbb Q/\mathbb Z)^n, m,n\in \mathbb N$.  
It is known that there exist $2^{\aleph_0}$-many pairwise non-isomorphic groups which are generated by two elements and divisible.  
(See \cite{lyndon-schupp}.)
These groups do not have any proper finite index subgroups. This family is closed under finite direct products.  We shall 
denote this class of group by $\mathcal D$. 

(ii) {\bf Torsion groups.} All torsion groups have vanishing $b_1$.  This follows easily from the basic fact that homology commutes 
with direct limit.  This family of groups is huge and includes many interesting groups such as Grigorchuk groups, the group of finitary permutations 
of $\mathbb N$, etc.   Elementary (abelian) 
examples include $A(\mathcal P):=\oplus_{p\in \mathcal P} \mathbb Z_p$, as $\mathcal P$ varies in the set of all (infinite) subsets of primes. 
Denote this class of groups by $\mathcal T$.

(iii) {\bf Acyclic groups.}  A group is said to be acyclic if its reduced homology with trivial $\mathbb Z$ coefficients vanishes.  
This class includes the Higman four-group \cite{dyer-vasquez}
and binate towers \cite{berrick}.  
It is known that  
any finitely generated group admits an embedding into a finitely generated acyclic group \cite{baumslag-heller-dyer}. The class of finitely generated acyclic groups, denoted $\mathcal A$, is closed under finite direct products and finite free products.  

(iv)   {\bf Higher rank lattices.}  Let $ G$ be a connected semisimple (real) linear Lie group which has no compact factors.  Suppose that 
the real rank of $G$ is at least $2$.  (The real rank of a linear Lie group is the dimension of the largest diagonalizable subgroup 
isomorphic to $\mathbb R_{>0}^\times$.)   Let $L\subset G$ be an irreducible lattice in $G$.  
Then it is a deep result of Margulis that any normal subgroup of $L$ is either finite or has finite index in $L$.  Since $L$ itself 
is not virtually abelian, it follows that $b_1(L)=0$ and that the same is true of any finite index subgroup of $L$.  (This is not true 
in the case of rank-$1$-lattices.)  Again, if $L_i\subset G_i, 1\le i\le n,$ 
are irreducible higher rank lattices, then the product $L:=\prod_{1\le i\le n} L_i $ also has trivial abelianization.  
Any finite index subgroup $\Lambda$ of $L$ admits a finite index subgroup $\Gamma$ which is a product $\prod \Gamma_i$ where each 
$\Gamma_i\subset L_i$ 
is a sublattice, i.e., finite index subgroup of $L_i$.  It follows that $b_1(\Gamma)=0$ and hence $b_1(\Lambda)=0$. Let us 
denote by $\mathcal L$ the class of all finite index subgroups $\Lambda$ of direct products $L=\prod_{1\le i\le n}L_i$ as above.

We now construct new examples of groups with property $R^{\chi}_{\infty}$.

\begin{proposition} \label{newgroups} We keep the above notation. Let $\mathcal C$ denote $\mathcal D\cup \mathcal T\cup  
\mathcal A\cup \mathcal L.$  Let $G$ be a group belonging to $\widetilde {\mathcal S}$ and  $H$ a group in $ \mathcal C$.  Suppose that
every homomorphism $H\to G$ is trivial.   Then:
(i)  $G\times H$ belongs to $\widetilde{\mathcal S}$ and hence has property $R^{\chi}_{\infty}$.
If $G$ is finitely generated, then $G*H\in \widetilde{ \SO}$ and so has property $R^\chi_\infty$. 
(ii) Let $K$ be a finite index subgroup of $G$ such that $b_1(K)=b_1(G)$.  Then $K\times H,  K*H \in \widetilde{\mathcal S}$ and so have property $R^{\chi}_{\infty}$.  
\end{proposition}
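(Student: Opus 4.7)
The plan is to reduce both parts to Theorem \ref{SO-R} by exhibiting, in each product construction, a characteristic subgroup whose quotient lies in $\mathcal{S}$. Throughout I fix a characteristic subgroup $N\triangleleft G$ with $\bar G:=G/N\in\mathcal{S}$, as guaranteed by $G\in\widetilde{\mathcal{S}}$.

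For the direct product in part (i), I would first show that $\{1\}\times H$ is characteristic in $G\times H$. For $\varphi\in\mathrm{Aut}(G\times H)$, the decomposition $\varphi(1_G,h)=(a(h),b(h))$ defines homomorphisms $a\colon H\to G$ and $b\colon H\to H$; the hypothesis forces $a$ to be trivial, so $\varphi(\{1\}\times H)\subseteq\{1\}\times H$, with equality obtained by applying the same reasoning to $\varphi^{-1}$. Hence $\varphi$ induces an automorphism $\tilde\varphi$ of $G$ that preserves $N$, so $\varphi$ preserves $N\times H$; the quotient is $\bar G\in\mathcal{S}$, giving $G\times H\in\widetilde{\mathcal{S}}$ and Theorem \ref{SO-R} supplies $R^{\chi}_{\infty}$. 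For the free product $G*H$ with $G$ finitely generated, the analogue is that $\langle\langle H\rangle\rangle^{G*H}$ is characteristic: by classical results on automorphisms of free products (Kurosh / Fouxe--Rabinovitch), any $\varphi\in\mathrm{Aut}(G*H)$ sends $H$ to a conjugate of a free factor, and the trivial-homomorphism hypothesis---combined with the retraction $G*H\twoheadrightarrow G$ killing $H$---rules out that conjugate being of $G$. Pulling $N$ back through $(G*H)/\langle\langle H\rangle\rangle\cong G$ then yields a characteristic subgroup with quotient $\bar G\in\mathcal{S}$.

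For part (ii), the plan is to apply part (i) with $K$ in place of $G$ once $K\in\widetilde{\mathcal{S}}$ is established; the trivial-homomorphism hypothesis passes to $K$ since $K\hookrightarrow G$. Consider $\bar K:=KN/N\leq\bar G$, a finite-index subgroup. The hypothesis $b_1(K)=b_1(G)$ together with the surjection $K\twoheadrightarrow\bar K$ and the standard inequalities $b_1(\bar G)\leq b_1(\bar K)\leq b_1(K)$ and $b_1(\bar G)\leq b_1(G)$ forces $b_1(\bar K)=b_1(\bar G)$, provided the characteristic $N$ is chosen inside $[G,G]$ (so that $b_1(\bar G)=b_1(G)$). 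Then Lemma \ref{betti}, applied after passing to the normal core of $\bar K$ in $\bar G$, gives $\bar K\in\mathcal{S}$. The kernel $K\cap N$ is normal in $K$; replacing it by $M:=\bigcap_{\psi\in\mathrm{Aut}(K)}\psi(K\cap N)$ makes it characteristic. When $K$ is finitely generated only finitely many subgroups of $K$ share the finite index $[K:K\cap N]$, so $M$ is a finite intersection with finite index in $K\cap N$, and $K/M$ is a finite extension of $\bar K$ still in $\mathcal{S}$ by another invocation of Lemma \ref{betti}. Thus $K\in\widetilde{\mathcal{S}}$, and part (i) applies to $K\times H$ and $K*H$.

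The main obstacles lie in part (ii). First, arranging $b_1(\bar G)=b_1(G)$ may require replacing $N$ by the characteristic subgroup $N\cap[G,G]$ and verifying that the modified quotient, an extension of $\bar G$ by the finitely generated abelian group $N/(N\cap[G,G])$, remains in $\mathcal{S}$; this hinges on the $\Sigma$-data cooperating with the extension. Second, showing that the characteristic intersection $M$ produces a quotient still in $\mathcal{S}$ requires careful Betti-number bookkeeping through a tower of finite extensions, again via Lemma \ref{betti}. A minor subtlety in the free-product case is the invocation of Fouxe--Rabinovitch-type structural results for $\mathrm{Aut}(G*H)$, which is precisely why the hypothesis requires $G$ to be finitely generated.
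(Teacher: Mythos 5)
Your part (i) follows the paper's argument essentially verbatim: triviality of all homomorphisms $H\to G$ makes $1\times H$ characteristic in $G\times H$, hence $N\times H$ is characteristic with quotient $\bar G\in\SO$; and for $G*H$ the normal closure $\langle\langle H\rangle\rangle$ is characteristic, so automorphisms descend to $\bar G$. One point is stated too loosely: ``any automorphism sends $H$ to a conjugate of a free factor'' is only literally true when $H$ is freely indecomposable. The paper first writes $G=G_1*\cdots*G_m$ and $H=H_1*\cdots*H_n$ as free products of freely indecomposable factors (possible since $G$ is finitely generated and each $H\in\mathcal C$ is either freely indecomposable or finitely generated), observes that no $H_j$ can be isomorphic to any $G_i$ (otherwise the retraction of $H$ onto $H_j$ composed with $H_j\cong G_i\hookrightarrow G$ would be a nontrivial homomorphism $H\to G$), and then checks invariance of $\langle\langle H\rangle\rangle$ on the Fouxe--Rabinovitch generators. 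Your sketch is repairable along exactly these lines.

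Part (ii) is where you genuinely diverge, and your route has concrete gaps. The paper's proof is one line: Lemma \ref{betti} (extended to non-normal $K$ via the core argument already used in Theorem \ref{comm1}) turns $b_1(K)=b_1(G)$ into $K\in\SO$, hence $K\in\widetilde{\SO}$; the hypothesis on homomorphisms $H\to G$ restricts to $K$; and (i) applies with $K$ in place of $G$. Nothing about the characteristic subgroup $N$ of $G$ needs to be transported into $K$. Your alternative construction fails at three points. (a) To force $b_1(\bar K)=b_1(\bar G)$ you must arrange $b_1(\bar G)=b_1(G)$ by shrinking $N$ to $N\cap[G,G]$; but the new quotient is an extension of $\bar G$ by the abelian group $N/(N\cap[G,G])$, its character sphere is in general strictly larger than $S(\bar G)$, and there is no reason it stays in $\SO$ --- you flag this yourself and leave it unresolved. (b) The finiteness of $M=\bigcap_{\psi\in\mathrm{Aut}(K)}\psi(K\cap N)$ rests on ``only finitely many subgroups of a given finite index,'' which requires $K$ to be finitely generated; that is not among your hypotheses at that stage. (c) The last step passes from $\bar K\in\SO$ to $K/M\in\SO$, where $K/M$ is an extension with \emph{finite kernel} and quotient $\bar K$; Lemma \ref{betti} concerns extensions with \emph{finite quotient} and transfers the property from the ambient group down to the subgroup, so ``another invocation of Lemma \ref{betti}'' does not apply there. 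As it stands, part (ii) of your proposal does not close.
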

\begin{proof} 
(i) Since any homomorphism $H\to G$ is trivial, the subgroup $H=1\times H\subset G\times H$ is 
characteristic in $G\times H$.   Since $G\in \widetilde{\mathcal S}$, there exists a characteristic subgroup 
$K\triangleleft G$ such that $\bar G:=G/K$ is in $\mathcal S$.  Then $K\times H$ is characteristic 
in $G\times H$. This is because under an automorphism of $G\times H$, 
$H$ gets mapped onto itself and $K\mod H$ gets mapped isomorphically onto $K\mod H$.  Hence $G\times H\in \widetilde{\mathcal S}$. By Theorem \ref{SO}, $G\times H$ has property $R_\infty^\chi$.  

Next we consider $G*H$ where $G\in \widetilde {\mathcal S}$ is finitely generated.   
Let $K\triangleleft G$ be a characteristic subgroup of $G$ such that $\bar G:=G/K\in \mathcal S$. 
If $H\in \mathcal D \cup \mathcal T \cup \mathcal L$, then it is easy to see that $H$ is freely indecomposable.   If $H\in \mathcal A$ is acyclic then $H$ is {\it finitely generated}.  Thus, for any $H\in \mathcal C$, we 
have a free product decomposition $H=H_1* \cdots *H_n$ where each $H_j$ is indecomposable as a free product.  
Since $G$ is finitely generated, we also have 
$G=G_1*\cdots*G_m$ where each $G_j$ is indecomposable as a free product.  By our assumption, 
none of the $H_j$ are isomorphic to any of the  $G_i$.

A result of Fouxe-Rabinowitsch \cite{F-R}, describes a set of generators 
of $G*H=C_1*\cdots C_{m+n}$ where $C_i=G_i, i\le m, C_i=H_{i-m}, i>m$.  These are of three types, namely, (i) permutation automorphisms $\pi$ which permute the factors (having fixed once for all, isomorphisms between 
two factors $C_i\to C_j, i<j,$  if it exists); (ii){\it  factor automorphism} $\phi$ which maps each $C_i$ to itself,  and, 
(iii) $FR$-automorphisms $\sigma=\sigma(i,y), y\in C_j, j\ne i$ where $\sigma|C_i$ equals conjugation by $y$, $\sigma|C_k=id$ 
if $k\ne i$.  By our above observation, any permutation automorphism preserves the $H_j, 1\le j\le n.$ 
It follows that the normal subgroup  $N$ of $G*H$ generated by $H$ is invariant under each of these generators of $\textrm{Aut}(G*H)$.   So $N$ is characteristic in $G*H$.  It follows that any automorphism $\theta\in \textrm{Aut}(G*H)$ induces an automorphism $\theta_0\in \textrm{Aut}(G)$ via the natural projection 
$\eta: G*H\to G$.   Now $\theta_0\in \textrm{Aut}(G)$ induces an automorphism $\bar\theta\in \textrm{Aut}(\bar G)$.
Denoting by $q: G*H\to \bar G$ the composition of the natural quotients, we have $\bar \theta=q\circ \theta$.  
and ${\rm Ker}(q)$ is characteristic in $G*H$.   So $G*H\in \widetilde{\mathcal S}$ and so has the property $R_\infty^\chi$ by 
Theorem \ref{SO}.

(ii)  By Lemma \ref{betti}, $K$ belongs to $\SO$.  Since any homomorphism $H\to G$ is trivial, the same is 
true if $G$ is replaced by $K$.  Thus the hypotheses of the statement of the theorem are valid when $G$ is 
replaced by $K$.  Therefore (ii) follows from (i). 
\end{proof}

\begin{remark}
There are groups with property $R^{\chi}_{\infty}$ but with empty $\Sigma$ - invariants. For example, the group $BS(2,3)$ has property $R_{\infty}$. A close inspection of the proof in \cite{FeG} shows that $BS(2,3)$ has property $R^{\chi}_{\infty}$ while it has empty $\Sigma^1$ so that $BS(2,3) \notin \SO$.
\end{remark}

In general the requirement that any homomorphism $H\to G$ is trivial is hard to verify.  However, in certain contexts this 
is easily verified or known.   Examples of such situations are: (a) $H$ is a torsion group and $G$ is torsion free.  (b) $H$ admits 
no finite dimensional linear representation and $G$ is linear.  For example we may take $G$ to be an irreducible lattice in 
a semisimple linear Lie group and $H$ to be a binate group (\cite[Theorem 3.1]{berrick-1994},\cite{berrick-1995}).   (c) If $G$ is a group such that 
any nontrivial element in $G$ has at most finitely many roots in $G$ and $H$ is divisible. For example, take $G$ to be a non-elementary 
hyperbolic group or is a subgroup of $GL(n,\mathbb Z)$ for some $n$. Note that if $G$ is the fundamental group of a closed orientable hyperbolic $3$-manifold then by \cite{BNS}, $\Sigma^1(G)$ is symmetric so $G\notin \SO$. In view of this, (i) of  Proposition \ref{newgroups} can be generalized as follows using the same arguments as in the proof of Proposition \ref{newgroups}.

\begin{proposition} \label{newgroups2} Let $\mathcal C$ denote $\mathcal D\cup \mathcal T\cup  
\mathcal A\cup \mathcal L$. 
Let $G$ be a group with property $R_{\infty}$ and $H$ be in $ \mathcal C$.  Suppose that
every homomorphism $H\to G$ is trivial then:\\
(i)  $G\times H$ has property $R_{\infty}$, \\
(ii) if $G$ is freely indecomposable or is a finite free product 
then $G*H$ has property $R_{\infty}$. 
\end{proposition}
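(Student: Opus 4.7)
The strategy is to follow the proof of Proposition \ref{newgroups} step by step, replacing the final invocation of Theorem \ref{SO-R} (which extracted a fixed rigid character, yielding the stronger conclusion $R^\chi_\infty$) by the elementary principle that if $N\triangleleft \Gamma$ is characteristic and $\phi\in\textrm{Aut}(\Gamma)$ induces $\bar\phi\in\textrm{Aut}(\Gamma/N)$, then the natural map $\mathcal R(\phi)\to \mathcal R(\bar\phi)$ is surjective, so $R(\bar\phi)=\infty$ implies $R(\phi)=\infty$. This is exactly what lets us transfer $R_\infty$ (rather than the more refined $R^\chi_\infty$) from a characteristic quotient to the ambient group.

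For (i), the argument of Proposition \ref{newgroups}(i) shows that the triviality of every homomorphism $H\to G$ forces $1\times H$ to be characteristic in $G\times H$: under any automorphism $\phi$, the image of $H$ projects trivially to $G$ and so must land inside $1\times H$, and by symmetry the image is all of $1\times H$. Thus $\phi$ descends to $\bar\phi\in\textrm{Aut}(G)$; since $G$ has property $R_\infty$, the principle above gives $R(\phi)\ge R(\bar\phi)=\infty$.

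For (ii), we show that the normal closure $N:=\langle\langle H\rangle\rangle$ in $G*H$ is characteristic. As noted in the proof of Proposition \ref{newgroups}, each $H\in \mathcal C$ admits a finite indecomposable free product decomposition $H=H_1*\cdots*H_n$: members of $\mathcal D,\mathcal T, \mathcal L$ are themselves indecomposable, and members of $\mathcal A$ are finitely generated and so admit Grushko decompositions. Together with the hypothesis that $G$ is freely indecomposable or a finite free product $G=G_1*\cdots*G_m$ of indecomposable factors, this yields a finite indecomposable decomposition of $G*H$. The triviality of all homomorphisms $H\to G$, combined with the existence of the natural retractions $H\twoheadrightarrow H_j$, forbids any $H_j$ from being isomorphic to any $G_i$; so the Fouxe-Rabinowitsch generators of $\textrm{Aut}(G*H)$ permute the two sets of factors separately, each preserving $N$. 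Hence $N$ is characteristic, every $\phi\in\textrm{Aut}(G*H)$ descends to $\bar\phi\in\textrm{Aut}(G)$, and the principle yields $R(\phi)=\infty$.

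The argument involves no genuinely new obstacle beyond Proposition \ref{newgroups}; the only conceptual change is to bypass the $\Sigma$-theoretic machinery that produced $R^\chi_\infty$ and instead use the elementary descent of $R_\infty$ through characteristic quotients. The one point requiring a moment of care is verifying that the Fouxe-Rabinowitsch theorem still applies when $G$ is merely freely indecomposable rather than finitely generated, but since that theorem requires only a finite indecomposable free product decomposition and not finite generation of the factors, this causes no real difficulty.
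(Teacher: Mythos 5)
Your proposal is correct and follows essentially the same route as the paper: reuse the characteristic-subgroup arguments from Proposition \ref{newgroups} to show that $1\times H$ (resp.\ the normal closure of $H$) is characteristic with quotient $G$, and then descend property $R_{\infty}$ from the quotient via the surjection of Reidemeister classes, which is exactly the cited \cite[Lemma 1.2(1)]{GW1}. Your extra remark that Fouxe--Rabinowitsch only needs a finite indecomposable free product decomposition (not finite generation of the factors) is a worthwhile clarification the paper leaves implicit.
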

\begin{proof}
Following the proof of (i) of Proposition \ref{newgroups}, for (i), $H=1\times H$ is characteristic in $G\times H$ with quotient $G$. Since $G$ has property $R_{\infty}$, it follows (e.g. \cite[Lemma 1.2(1)]{GW1}) that $G\times H$ has property $R_{\infty}$. Similarly, for (ii), there is a characteristic subgroup $N$ in $G\ast H$ with quotient $G$ so that we can conclude that $G\ast H$ must also have property $R_{\infty}$.
\end{proof}

\subsection*{Concluding remarks}
Although the notion of $R^{\chi}_{\infty}$ is inspired by the use of $\Sigma$-theory,  
there are groups with such property but $\Sigma^1$ is empty (for instance, any free products have empty $\Sigma^1$). In the last section, we construct certain free products $G*H$ with property $R_{\infty}$. In particular, when $H\in \mathcal D$ is divisble, $H$ does not contain any proper subgroup of finite index. Yet, if $G$ has property $R_{\infty}$ (or $G\in \widetilde{\SO}$) and every $H\to G$ is trivial then $G*H$ has property $R_{\infty}$ (or $R^{\chi}_{\infty}$). On the other hand, it has been shown in \cite{GSW2} that $G*H$ has property $R_{\infty}$ provided both $G$ and $H$ are freely indecomposable and each contains proper characteristic finite index subgroups. We ask the following

\begin{question}
Let $G=G_1*\cdots *G_k$ be a finite free product of freely indecomposable (not necessarily finitely generated) groups $G_i$. Does $G$ necessarily have property $R_{\infty}$? 
\end{question}

\noindent
{\bf Acknowledgments:} We thank the referee of the for his/her careful reading of the paper and for his/her comments,  
which resulted in improved presentation.

\end{document}